\newtheorem{thm}{Theorem}[section]
\newtheorem{lem}{Lemma}[section]
\newtheorem{cor}{Corollary}[section]
\newtheorem{defi}{Definition}[section]
\newtheorem{rem}{Remark}[section]
\newtheorem*{theorem}{\it{Theorem}}
\begin{document}

\title{S-limit shadowing and a global description of Li--Yorke type chaos}
\author{Noriaki Kawaguchi}
\subjclass[2020]{37B05; 37B35; 37B65; 37D45}
\keywords{s-limit shadowing; Li--Yorke chaos; Furstenberg family}
\address{Research Institute of Science and Technology, Tokai University, 4-1-1 Kitakaname, Hiratsuka, Kanagawa 259-1292, Japan}
\email{gknoriaki@gmail.com}

\begin{abstract}
For any continuous self-map of a compact metric space, we extend a partition of each chain component with respect to a chain proximal relation to a $G_\delta$-partition of the phase space. Under the assumption of s-limit shadowing, we use this partition to give a global description of Li--Yorke type chaos corresponding to several Furstenberg families.   
\end{abstract}

\maketitle

\markboth{NORIAKI KAWAGUCHI}{S-limit shadowing and a global description of Li--Yorke type chaos}

\section{Introduction}

{\em Shadowing} is a hyperbolic feature of topological dynamical systems and has played an important role in the global theory of dynamical systems (see \cite{AH,P} for background). It was introduced in the works of Anosov and Bowen \cite{An,B}; and generally refers to the phenomenon in which coarse orbits, or {\em pseudo-orbits}, are approximated by true orbits. Various types of shadowing properties have been a subject of intensive research (see, for example, \cite{WOC}). As an expression of hyperbolicity of dynamical systems, shadowing is closely related to the complex phenomena known as `chaos'.

The coarse structure of pseudo-orbits, or {\em chains}, is of independent interest. The chain components as they appear in the so-called fundamental theorem of dynamical systems by Conley \cite{C} are basic objects in the theory of dynamical systems.  Given a compact metric space $X$ and a continuous map
\[
f\colon X\to X,
\]
let $\mathcal{C}(f)$ denote the set of chain components for $f$. The basin $W^s(C)$ of $C$ is a $G_\delta$-subset of $X$ for every $C\in\mathcal{C}(f)$. Then, $X$ is partitioned into the disjoint union of the basins of chain components:
\[
X=\bigsqcup_{C\in\mathcal{C}(f)}W^s(C).
\]
A {\em chain proximal relation} on a chain component was introduced in \cite{S} (see also \cite{RW}). In this paper, we consider a partition $\mathcal{D}(C)$ of each $C\in\mathcal{C}(f)$ with respect to the chain proximal relation:
\[
C=\bigsqcup_{D\in\mathcal{D}(C)}D. 
\]
By considering a kind of stable set $V^s(D)$ for each $D\in\mathcal{D}(C)$, we introduce a partition of $W^s(C)$:
\[
W^s(C)=\bigsqcup_{D\in\mathcal{D}(C)}V^s(D)
\]
such that $V^s(D)$ is a $G_\delta$-subset of $W^s(C)$ for every $D\in\mathcal{D}(C)$. Accordingly, we obtain a partition of $X$:
\[
X=\bigsqcup_{C\in\mathcal{C}(f)}W^s(C)=\bigsqcup_{C\in\mathcal{C}(f)}\bigsqcup_{D\in\mathcal{D}(C)}V^s(D)
\]
into $G_\delta$-subsets of $X$. We use this partition to give a global description of Li--Yorke type chaos.

The {\em  Li--Yorke chaos} introduced in \cite{LY2} is a standard rigorous definition of chaos.  In this paper, we consider a few variations of Li--Yorke chaos with respect to certain {\em Furstenberg families}. Under the assumption of a variation of shadowing, namely {\em s-limit shadowing}, we describe a global chaotic structure in terms of Li--Yorke type chaos. For any $x_1,x_2,\dots, x_n\in X$ and $r>0$, let
\[
S_f(x_1,x_2,\dots,x_n;r)=\{i\in\mathbb{N}_0\colon\min_{1\le j<k\le n}d(f^i(x_j),f^i(x_k))>r\}
\] 
and
\[
T_f(x_1,x_2,\dots,x_n;r)=\{i\in\mathbb{N}_0\colon\max_{1\le j<k\le n}d(f^i(x_j),f^i(x_k))<r\},
\]
where $d$ is a metric on $X$. We consider four Furstenberg families
\[
\mathcal{F}_{\rm ud1},\mathcal{F}_{\rm thick},\mathcal{F}_{\rm iap}^\ast,\mathcal{F}_{\rm infinite}\subset2^{\mathbb{N}_0}.
\]
The main results are the following three theorems. A {\em $G_\delta$-subset} of a topological space $Z$ is a subset of $Z$ that is a countable intersection of open subsets of $Z$.

\begin{thm}
Let $f\colon X\to X$ be a continuous map and let $C\in\mathcal{C}(f)$. If $f$ has the s-limit shadowing property, then for any $n\ge2$, the following conditions are equivalent
\begin{description}
\item[\rm$\:\:$(1)] There are $D\in\mathcal{D}(C)$ and $\delta_n>0$ such that $D^n$ contains a $\delta_n$-distal $n$-tuple for $f$.
\item[\rm$\:\:$(1')] There is $\delta_n>0$ such that for any $D\in\mathcal{D}(C)$, $D^n$ contains a $\delta_n$-distal $n$-tuple for $f$.
\item[\rm$\:\:$(2)] There is $\delta_n>0$ such that for every $D\in\mathcal{D}(C)$,
\[
\{(x_1,x_2,\dots,x_n)\in[V^s(D)]^n\colon S_f(x_1,x_2,\dots,x_n;\delta_n)\in\mathcal{F}_{\rm ud1}\}
\]
and
\[
\bigcap_{\epsilon>0}\{(x_1,x_2,\dots,x_n)\in[V^s(D)]^n\colon T_f(x_1,x_2,\dots,x_n;\epsilon)\in\mathcal{F}_{\rm ud1}\}
\]
are dense $G_\delta$-subsets of $[V^s(D)]^n$.
\item[\rm$\:\:\:$(3)] There are $(y_1,y_2,\dots,y_n)\in[W^s(C)]^n$ and $\delta_n>0$ such that
\[
S_f(y_1,y_2,\dots,y_n;\delta_n)\in\mathcal{F}_{\rm thick},
\]
and
\[
T_f(y_1,y_2,\dots,y_n;\epsilon)\in\mathcal{F}_{\rm infinite}
\]
for all $\epsilon>0$.
\end{description}
\end{thm}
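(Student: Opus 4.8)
The plan is to prove the cycle $(1)\Rightarrow(1')\Rightarrow(2)\Rightarrow(3)\Rightarrow(1)$, after first recording two elementary facts about the families that make the ``soft'' arrows transparent. I would begin by checking the inclusions $\mathcal{F}_{\rm ud1}\subseteq\mathcal{F}_{\rm thick}$ and $\mathcal{F}_{\rm ud1}\subseteq\mathcal{F}_{\rm infinite}$: a set of upper density $1$ has complement of lower density $0$, hence non-syndetic complement (syndetic sets have positive lower density), hence contains arbitrarily long intervals, and is of course infinite. With these, $(2)\Rightarrow(3)$ is immediate: by Baire the two dense $G_\delta$ sets in $(2)$ meet, and any point of their intersection lies in $[V^s(D)]^n\subseteq[W^s(C)]^n$ and reads off $S_f(\cdot;\delta_n)\in\mathcal{F}_{\rm ud1}\subseteq\mathcal{F}_{\rm thick}$ together with $T_f(\cdot;\epsilon)\in\mathcal{F}_{\rm ud1}\subseteq\mathcal{F}_{\rm infinite}$ for every $\epsilon$; and $(1')\Rightarrow(1)$ is trivial. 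The $G_\delta$ part of $(2)$ I would get by semicontinuity: for fixed $N$ the count $\frac1N|\{0\le i<N:\min_{j<k}d(f^i x_j,f^i x_k)>\delta_n\}|$ is lower semicontinuous in $(x_1,\dots,x_n)$, so each condition ``$>1-1/m$'' is open, and writing $\overline d(S_f(\cdot;\delta_n))=1$ as $\bigcap_m\bigcap_M\bigcup_{N\ge M}\{\cdot\}$ exhibits a $G_\delta$; the same works for the $T_f$ set with the open condition ``$<\epsilon$''. The real content is density, obtained by producing tuples lying in both sets at once.

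The heart of the argument is $(1')\Rightarrow(2)$, where s-limit shadowing enters. Fix $D\in\mathcal{D}(C)$, a target $(x_1,\dots,x_n)\in[V^s(D)]^n$, and $\gamma>0$. Using that $D$ carries a $\delta_n$-distal tuple (the separating data) together with the fact that the points of $D$ are pairwise chain proximal (the clustering data), I would build $n$ pseudo-orbits, one per coordinate, that begin at $x_1,\dots,x_n$ and then alternate long blocks: on the odd blocks the coordinates track the distal orbit and stay pairwise $>\delta_n$ apart, while on the $m$-th even block they are pushed pairwise within $1/m$ along a common orbit segment in $D$. Choosing each block to dwarf the total length of all preceding blocks forces, along the odd-block endpoints, the density of separated times to tend to $1$, and, along the even-block endpoints, the density of $(1/m)$-clustered times to tend to $1$ for every $m$; hence the shadowing orbit will satisfy $S_f(\cdot;\delta_n)\in\mathcal{F}_{\rm ud1}$ and $T_f(\cdot;\epsilon)\in\mathcal{F}_{\rm ud1}$ for all $\epsilon>0$ simultaneously. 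Applying s-limit shadowing with precision $<\gamma$ yields a true tuple $\gamma$-close to the target; because the whole construction takes place near the single class $D$ and the asymptotic (limit) half of s-limit shadowing makes the shadowing orbit asymptotic to a reference orbit in $D$, the resulting points lie in $V^s(D)$. Baire's theorem then upgrades this density, against the $G_\delta$ structure above, to the stated dense $G_\delta$ conclusion for both sets.

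It remains to close the cycle with $(1)\Rightarrow(1')$ and $(3)\Rightarrow(1)$. For $(1)\Rightarrow(1')$ I would transport a $\delta_n$-distal tuple from the given class $D_0$ to an arbitrary $D$: fixing $b\in D$ and using chain transitivity of $C$ to connect $b$ to each coordinate of the distal tuple, I would splice these chains in front of the distal orbits and invoke the structural properties of the partition $\mathcal{D}(C)$ recorded earlier to place a distal tuple inside $D$ at a scale independent of $D$. For $(3)\Rightarrow(1)$, given $(y_1,\dots,y_n)\in[W^s(C)]^n$ with $S_f(\cdot;\delta_n)\in\mathcal{F}_{\rm thick}$ and $T_f(\cdot;\epsilon)\in\mathcal{F}_{\rm infinite}$ for all $\epsilon$, I would use the arbitrarily long separated blocks from thickness to extract, by compactness, limit points $a_j=\lim_m f^{s_m}(y_j)\in\omega(y_j)\subseteq C$ that stay pairwise $\ge\delta_n$ apart for all forward times, giving a $\delta_n$-distal tuple in $C$; the hypothesis $T_f(\cdot;\epsilon)\in\mathcal{F}_{\rm infinite}$ for every $\epsilon$, together with the genuine orbit segments joining the clustering times to the separated blocks, then furnishes the $\epsilon$-chains showing the $a_j$ are pairwise chain proximal, so they lie in a common $D\in\mathcal{D}(C)$.

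The main obstacle I anticipate is the simultaneous control in $(1')\Rightarrow(2)$: arranging the block lengths and clustering tolerances so that the two density-$1$ requirements hold at once while keeping every spliced transition a legitimate $\delta$-small step of an s-limit pseudo-orbit, and---most delicately---verifying that the shadowing orbit lands in $V^s(D)$ rather than merely in $W^s(C)$. Securing the latter is exactly what forces the use of the asymptotic half of s-limit shadowing and the confinement of the entire construction to the single chain proximal class $D$.
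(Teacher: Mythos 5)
Your skeleton (the cycle of implications, the inclusions $\mathcal{F}_{\rm ud1}\subset\mathcal{F}_{\rm thick}\cap\mathcal{F}_{\rm infinite}$, the semicontinuity argument for the $G_\delta$ structure, and Baire for $(2)\Rightarrow(3)$) is sound, and your $(3)\Rightarrow(1)$ is essentially the paper's: extract limits $a_j$ along the thick blocks to get a distal tuple in $C$, then use the clustering times to place all $a_j$ in one class of $\mathcal{D}(C)$. (One caveat there: your $\epsilon$-chains pass through points $f^t(y_j)$, which lie only near $C$, not in $C$; chain proximality is defined via chains of $g=f|_C$, so these must first be perturbed into $C$ — or one argues as the paper does, by pushing the classes $f^{i_m}(E_\delta)$ forward and using that $\mathcal{D}(C,\delta)$ is finite.)

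The genuine gaps are in the other two arrows. In $(1')\Rightarrow(2)$ you splice infinitely many transitions, each ``a legitimate $\delta$-small step''; but a $\delta$-limit-pseudo orbit must have jump sizes tending to zero, so an infinite alternation with jumps of fixed size $\delta$ is merely a $\delta$-pseudo orbit, and the limit half of s-limit shadowing does not apply to it. Without that half, plain $\gamma$-shadowing gives clustering only below $2\gamma$ (so $T_f(\cdot;\epsilon)\in\mathcal{F}_{\rm ud1}$ fails for small $\epsilon$) and gives no decay of $d(f^i(w_l),f^i(D_{\delta'}))$, so $w_l\in V^s(D)$ is lost as well; moreover your phrase ``asymptotic to a reference orbit in $D$'' is not available, since your pseudo-orbit never settles onto a single orbit. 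The construction is repairable (use $\eta_m$-chains with $\eta_m\to0$ at the $m$-th splice, with lengths controlled via property (P4) so all coordinates stay synchronized and in the correctly phased classes of every $\mathcal{D}(C,\delta')$), but this heavy bookkeeping is exactly what the paper avoids: it never needs a simultaneous construction. The two sets in (2) are shown dense \emph{separately} — Lemma 4.1 performs a single splice whose tail is a genuine orbit (so the limit condition is trivial), yielding tuples asymptotic to the distal tuple, for which $S_f(\cdot;\delta_n)$ is cofinite and hence of upper density $1$; Corollary 4.1 yields tuples asymptotic to one orbit for the $T$-set; Baire enters only in $(2)\Rightarrow(3)$. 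Finally, your $(1)\Rightarrow(1')$ mechanism (chains from $b\in D$ spliced in front of the distal orbits) can only be turned into genuine points by shadowing, and shadowing points need not lie in $C$, let alone in $D$, so it cannot ``place a distal tuple inside $D$''. The correct argument (Lemma 2.5 of the cited paper [K3], which the paper invokes) needs no shadowing: each forward image $(f^i(y_1),\dots,f^i(y_n))$ is a distal tuple in $f^i(D_0)$; for every $\delta>0$, cyclicity of $\mathcal{D}(C,\delta)$ puts some $f^i(D_0)$ inside the class $E_\delta\supset D$; and compactness as $\delta\downarrow0$, with upper semicontinuity of $\inf_i\min_{j<k}d(f^i(\cdot),f^i(\cdot))$, produces a distal tuple in $D=\bigcap_{\delta>0}E_\delta$.
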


\begin{thm}
Let $f\colon X\to X$ be a continuous map and let $C\in\mathcal{C}(f)$. If $f$ has the s-limit shadowing property, then for any $n\ge2$, the following conditions are equivalent
\begin{description}
\item[\rm$\:\:$(1)]
\[
\Delta_n=\inf_{D\in\mathcal{D}(C)}\sup_{x_1,x_2,\dots,x_n\in D}\min_{1\le j<k\le n}d(x_j,x_k)>0.
\]
\item[\rm$\:\:$(2)] There is $\delta_n>0$ such that for every $D\in\mathcal{D}(C)$,
\[
\{(x_1,x_2,\dots,x_n)\in[V^s(D)]^n\colon S_f(x_1,x_2,\dots,x_n;\delta_n)\in\mathcal{F}_{\rm iap}^\ast\}
\]
and
\[
\bigcap_{\epsilon>0}\{(x_1,x_2,\dots,x_n)\in[V^s(D)]^n\colon T_f(x_1,x_2,\dots,x_n;\epsilon)\in\mathcal{F}_{\rm ud1}\}
\]
are dense $G_\delta$-subsets of $[V^s(D)]^n$.
\item[\rm$\:\:$(3)] There are $(y_1,y_2,\dots,y_n)\in[W^s(C)]^n$ and $\delta_n>0$ such that
\[
S_f(y_1,y_2,\dots,y_n;\delta_n)\in\mathcal{F}_{\rm iap}^\ast,
\]
and
\[
T_f(y_1,y_2,\dots,y_n;\epsilon)\in\mathcal{F}_{\rm infinite}
\]
for all $\epsilon>0$.
\end{description}
\end{thm}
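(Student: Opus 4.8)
The plan is to prove the cycle of implications $(1)\Rightarrow(2)\Rightarrow(3)\Rightarrow(1)$, using s-limit shadowing to realize prescribed orbit itineraries and a Baire category argument to produce the dense $G_\delta$ sets. Throughout I would fix $\delta_n$ to be a suitable fraction of $\Delta_n$. The key structural remark is that infinite arithmetic progressions are indexed by pairs $(a,d)\in\mathbb{N}_0\times\mathbb{N}$, so that $\{S_f(x_1,\dots,x_n;\delta_n)\in\mathcal{F}_{\rm iap}^\ast\}$ is the countable intersection over all $(a,d)$ of the open conditions $S_f(x_1,\dots,x_n;\delta_n)\cap\{a+kd:k\ge0\}\ne\emptyset$, while $\bigcap_{\epsilon>0}\{T_f(\cdot;\epsilon)\in\mathcal{F}_{\rm ud1}\}$ is plainly $G_\delta$ once upper density $1$ is unwound as $\bigcap_\ell\bigcap_M\bigcup_{N>M}\{|T_f\cap[0,N)|>(1-1/\ell)N\}$, each displayed condition being open by the strict inequalities defining $S_f$ and $T_f$.

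For $(1)\Rightarrow(2)$, the main constructive step, it suffices to establish density of each basic open condition. Given an arbitrary tuple in $[V^s(D)]^n$, its coordinates have orbits that eventually lie near $D$; from there I would splice in a pseudo-orbit that, at one prescribed time inside the target progression $\{a+kd\}$, routes the $n$ coordinates through an $n$-tuple of $D$ witnessing the separation $\Delta_n$, and that, on a density-$1$ set of times, routes them through a common point of $D$ via chain proximality within the class, forcing arbitrarily small proximity. The observation that reconciles these two demands is that a subset of $\mathbb{N}_0$ can meet every infinite arithmetic progression while having density $0$, so the separation times can be made sparse enough to leave a density-$1$ complement for proximity. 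Since the resulting sequence is an asymptotic pseudo-orbit, s-limit shadowing supplies a true orbit that tracks and converges to the schedule, hence lies in $V^s(D)$ and is close to the original tuple, giving both density and membership in $[V^s(D)]^n$.

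The implication $(2)\Rightarrow(3)$ is immediate: fixing any $D$, the two dense $G_\delta$ subsets of $[V^s(D)]^n$ meet by Baire, any tuple in the intersection lies in $[W^s(C)]^n$ because $V^s(D)\subset W^s(C)$, and it satisfies $S_f\in\mathcal{F}_{\rm iap}^\ast$ together with $T_f(\cdot;\epsilon)\in\mathcal{F}_{\rm ud1}\subset\mathcal{F}_{\rm infinite}$ for every $\epsilon>0$. For $(3)\Rightarrow(1)$ I would argue by contraposition: if $\Delta_n=0$ there are classes $D_m\in\mathcal{D}(C)$ whose $n$-point separation suprema tend to $0$, while $T_f(\cdot;\epsilon)\in\mathcal{F}_{\rm infinite}$ for all $\epsilon$ forces $y_1,\dots,y_n$ to be mutually chain proximal, so their limiting configurations concentrate in single classes. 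The decisive move is to convert the timing condition $S_f\in\mathcal{F}_{\rm iap}^\ast$ into a uniform geometric bound: using chain recurrence inside $C$ and s-limit shadowing of periodic pseudo-orbits, I would produce, for each class, an infinite arithmetic progression of return times to a prescribed neighborhood of that class, against which $S_f$ must separate; carrying this out uniformly over $\mathcal{D}(C)$ yields $\Delta_n\ge\delta_n/2>0$, a contradiction.

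I expect the main obstacle to be precisely this last transfer in $(3)\Rightarrow(1)$: upgrading the existence of a single proximal tuple with $\mathcal{F}_{\rm iap}^\ast$-separation to the uniform lower bound $\Delta_n>0$ over all classes of $\mathcal{D}(C)$. This is exactly where the dual family $\mathcal{F}_{\rm iap}^\ast$ is matched to the recurrence structure, meeting every infinite arithmetic progression against the return-time progressions supplied by shadowed periodic chains, and where the interplay between chain proximality and s-limit shadowing must be controlled most carefully.
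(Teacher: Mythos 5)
Your overall architecture (the cycle $(1)\Rightarrow(2)\Rightarrow(3)\Rightarrow(1)$, the $G_\delta$ bookkeeping via the characterization of $\mathcal{F}_{\rm iap}^\ast$ as the sets meeting every progression $\langle p,m\rangle$, and Baire category for density) matches the paper, but the decisive step of $(3)\Rightarrow(1)$ --- the one you yourself flag as the main obstacle --- rests on a tool that cannot deliver it. You propose to produce the arithmetic progression of return times ``using chain recurrence inside $C$ and s-limit shadowing of periodic pseudo-orbits.'' Shadowing manufactures \emph{new} orbits; it places no constraint whatsoever on the \emph{given} tuple $(y_1,\dots,y_n)$, and it is the given tuple whose separation set $S_f(y_1,\dots,y_n;\delta_n)$ must be shown to miss some progression. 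The paper's mechanism is shadowing-free and purely structural: the proximality hypothesis puts $(y_1,\dots,y_n)\in[V^s(D)]^n$ for a single $D\in\mathcal{D}(C)$; assuming $\Delta_n=0$, one picks a class $D_h$ with $n$-point separation $<h<\delta_n/2$, then a scale $\beta_h$ so small that the $\sim_{C,\beta_h}$-class $E_h\supset D_h$ still has separation $<2h$ (a compactness point your sketch omits), then $p\ge0$ with $f^p(D)\subset E_h$. Since $f$ permutes $\mathcal{D}(C,\beta_h)$ cyclically with period $m_h=|\mathcal{D}(C,\beta_h)|$, membership in $V^s(D)$ forces $d(f^{p+m_hq}(y_l),E_h)\to0$ as $q\to\infty$, so along a tail progression $\langle p', m_h\rangle$ the minimal pairwise distance of the tuple stays below $2h<\delta_n$; hence $S_f\notin\mathcal{F}_{\rm iap}^\ast$. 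The period of the progression is the combinatorial period $m_h$ of the cyclic decomposition, not the period of any shadowed orbit; without this identification your argument does not close.

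A related, though repairable, oversight occurs in $(1)\Rightarrow(2)$: you route all $n$ coordinates ``through an $n$-tuple of $D$ witnessing the separation $\Delta_n$'' at a time $t\equiv a\pmod d$. By property (P2), a $\delta$-chain starting near the class $E'\supset D$ at time $0$ lies at time $t$ in the class $f^t(E')$, so points of $D$ itself are reachable only at times $t\equiv0\pmod{\mu}$ with $\mu=|\mathcal{D}(C,\delta)|$; the congruences $t\equiv0\pmod\mu$ and $t\equiv a\pmod d$ can be incompatible (e.g.\ $\mu=d=2$, $a=1$). The paper instead targets a $\Delta_n$-separated tuple of the shifted class $f^t(D)\in\mathcal{D}(C)$, which is exactly why condition (1) is an infimum over \emph{all} of $\mathcal{D}(C)$. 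With that correction your single-orbit scheme (sparse separation times meeting every progression, density-one proximity elsewhere) could be pushed through, but it is substantially heavier than needed: the paper only has to exhibit, for each fixed $\langle p,m\rangle$, one separation time in that progression near any given tuple (Baire then intersects over $(p,m)$), and the $\mathcal{F}_{\rm ud1}$-proximity set comes for free from Lemma 4.1/Corollary 4.1, whose asymptotic tuples make $T_f(\cdot;\epsilon)$ cofinite rather than merely of density one.
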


\begin{thm}
Let $f\colon X\to X$ be a continuous map and let $C\in\mathcal{C}(f)$. If $f$ has the s-limit shadowing property, then for any $n\ge2$, the following conditions are equivalent
\begin{description}
\item[\rm$\:\:$(1)] There is $D\in\mathcal{D}(C)$ such that $|D|\ge n$.
\item[\rm$\:\:$(2)] There is $\delta_n>0$ such that for every $D\in\mathcal{D}(C)$,
\[
\{(x_1,x_2,\dots,x_n)\in[V^s(D)]^n\colon S_f(x_1,x_2,\dots,x_n;\delta_n)\in\mathcal{F}_{\rm infinite}\}
\]
and
\[
\bigcap_{\epsilon>0}\{(x_1,x_2,\dots,x_n)\in[V^s(D)]^n\colon T_f(x_1,x_2,\dots,x_n;\epsilon)\in\mathcal{F}_{\rm ud1}\}
\]
are dense $G_\delta$-subsets of $[V^s(D)]^n$.
\item[\rm$\:\:$(3)] There are $(y_1,y_2,\dots,y_n)\in[W^s(C)]^n$ and $\delta_n>0$ such that
\[
S_f(y_1,y_2,\dots,y_n;\delta_n)\in\mathcal{F}_{\rm infinite},
\]
and
\[
T_f(y_1,y_2,\dots,y_n;\epsilon)\in\mathcal{F}_{\rm infinite}
\]
for all $\epsilon>0$.
\end{description}
\end{thm}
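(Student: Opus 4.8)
The plan is to establish the cycle $(1)\Rightarrow(2)\Rightarrow(3)\Rightarrow(1)$, invoking s-limit shadowing only for the first implication and using compactness and Baire-category arguments for the other two. For $(2)\Rightarrow(3)$ I would first note that $[V^s(D)]^n$ is a $G_\delta$-subset of the compact metric space $X^n$, hence a Baire space, and is nonempty since $D\subseteq V^s(D)$. By Baire's theorem the two dense $G_\delta$ sets in $(2)$ meet, so one may pick $(y_1,\dots,y_n)\in[V^s(D)]^n$ with $S_f(y_1,\dots,y_n;\delta_n)\in\mathcal{F}_{\rm ud1}$ and $T_f(y_1,\dots,y_n;\epsilon)\in\mathcal{F}_{\rm ud1}$ for every $\epsilon>0$. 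Since every set of upper density one is infinite, i.e. $\mathcal{F}_{\rm ud1}\subseteq\mathcal{F}_{\rm infinite}$, and $V^s(D)\subseteq W^s(C)$, this tuple witnesses $(3)$.

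For $(3)\Rightarrow(1)$ I would extract limit points. From $S_f(y_1,\dots,y_n;\delta_n)\in\mathcal{F}_{\rm infinite}$ choose $i_m\to\infty$ with $\min_{j<k}d(f^{i_m}(y_j),f^{i_m}(y_k))>\delta_n$ and, passing to a subsequence, $f^{i_m}(y_j)\to z_j$ for each $j$; then $d(z_j,z_k)\ge\delta_n$, so $z_1,\dots,z_n$ are pairwise distinct, and $z_j\in\omega(y_j)\subseteq C$ because $y_j\in W^s(C)$. It remains to see that the $z_j$ lie in a single chain proximal class. Fix $\epsilon>0$; take $m$ so large that $d(f^{i_m}(y_j),z_j)<\epsilon$ for all $j$, and, using $T_f(y_1,\dots,y_n;\epsilon)\in\mathcal{F}_{\rm infinite}$, a time $t>i_m$ with $\max_{j<k}d(f^t(y_j),f^t(y_k))<\epsilon$. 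The orbit segments $(f^{i_m}(y_j),\dots,f^t(y_j))$ are genuine orbits of the common length $t-i_m$; prefixing the small jump $z_j\mapsto f^{i_m}(y_j)$ and appending a jump from $f^t(y_j)$ to the common point $w=f^t(y_1)$ produces, for each $j$, an $\epsilon$-chain from $z_j$ to $w$, all of the same length. As $\epsilon>0$ was arbitrary, this gives synchronized chains bringing $z_1,\dots,z_n$ to a common endpoint, so the $z_j$ are mutually chain proximal and the class $D\in\mathcal{D}(C)$ containing them has $|D|\ge n$.

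The substantial implication is $(1)\Rightarrow(2)$. Fix a class $D_0$ with $n$ distinct, mutually chain proximal points $p_1,\dots,p_n$ and set $\delta_n=\tfrac14\min_{j<k}d(p_j,p_k)>0$; this $\delta_n$ will serve uniformly for all $D$. That both sets are $G_\delta$ is routine: $S_f(x_1,\dots,x_n;\delta_n)\in\mathcal{F}_{\rm infinite}$ is $\bigcap_N\bigcup_{i\ge N}\{(x_1,\dots,x_n)\in[V^s(D)]^n:\min_{j<k}d(f^i(x_j),f^i(x_k))>\delta_n\}$, and the upper-density-one condition on $T_f(\cdot;\epsilon)$ is a countable intersection of unions of the open conditions $\tfrac1N|\{i<N:\max_{j<k}d(f^i(x_j),f^i(x_k))<\epsilon\}|>1-\tfrac1k$, with the extra intersection over $\epsilon=1/l$ preserving the $G_\delta$ form. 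For density, given a target $(a_1,\dots,a_n)\in[V^s(D)]^n$ and $\eta>0$, I would build approximants by shadowing limit pseudo-orbits. For the proximal set: route the orbits of $a_1,\dots,a_n$, after an initial segment, onto a single $D$-orbit via synchronized $\epsilon_m$-chains with $\epsilon_m\to0$; s-limit shadowing yields $x_i$ with $d(x_i,a_i)<\eta$ asymptotic to a common $D$-orbit, so $T_f(x_1,\dots,x_n;\epsilon)$ is cofinite, hence of upper density one, for every $\epsilon$. For the distal set: along a sparse increasing sequence of times insert synchronized, equal-length detours (padded with chain-recurrence cycles) carrying the $i$-th orbit within $\delta_n$ of $p_i$ and returning it to track $D$, with jump errors tending to $0$; the shadowing orbits then satisfy $\min_{j<k}d(f^t(x_j),f^t(x_k))\ge2\delta_n$ at the infinitely many apex times, so $S_f(x_1,\dots,x_n;\delta_n)\in\mathcal{F}_{\rm infinite}$.

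The main obstacle is to guarantee that these shadowing orbits remain in $V^s(D)$: the detours visit the distinct class $D_0$ infinitely often, so membership cannot follow from everywhere-asymptoticity to a $D$-orbit. Here I would lean on the characterization of $V^s(D)$ established earlier, with membership governed by the density-one (chain-stable) asymptotic class, so that detours occurring on a set of times of density zero with vanishing errors leave the class unchanged. Verifying that the detours can be synchronized and of equal length, that the intervening segments track $D$ on a density-one set, and that the assembled sequence is a genuine limit $\delta$-pseudo-orbit to which s-limit shadowing applies, are the delicate points; once $x_i\in V^s(D)$ is secured, density of both sets, and hence $(2)$, follows.
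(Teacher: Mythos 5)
Your implications $(2)\Rightarrow(3)$ and $(3)\Rightarrow(1)$ are essentially sound. The latter in fact takes a different route from the paper: you verify chain proximality of the limit points $z_1,\dots,z_n$ directly, building synchronized $\epsilon$-chains to a common endpoint, whereas the paper first uses the $T_f$ condition to place $(y_1,\dots,y_n)$ in a single $[V^s(D)]^n$ and then passes limits through the $\delta$-cyclic classes $\mathcal{D}(C,\delta)$. Your version works, with one caveat you should state: the chains in the definition of the chain proximal relation are chains of $g=f|_C$, so their points must lie in $C$, while your chains pass through the points $f^i(y_j)\notin C$; this is repaired by projecting each $f^i(y_j)$ to a nearest point of $C$ and taking $i_m$ large, since $d(f^i(y_j),C)\to0$ and $f$ is uniformly continuous.

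The genuine gap is in $(1)\Rightarrow(2)$, exactly at what you yourself call ``the main obstacle,'' and it is the crux of the theorem, not a deferrable detail. Your strategy for the distal set builds a single limit-pseudo-orbit with \emph{infinitely many} detours through the class $D_0$, and then membership of the shadowing point in $V^s(D)$ is genuinely unresolved: by definition, $x\in V^s(D)$ requires $\lim_{i\to\infty}d(f^i(x),f^i(D_\delta))=0$ for \emph{every} $\delta>0$, and an orbit returning near $p_j\in D_0$ at infinitely many apex times can only satisfy this if the sets $f^{t_k}(D_\delta)$ themselves approach $p_j$ at every scale $\delta$ --- something your synchronization, performed at the single scale of the shadowing constant, does not provide. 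The ``density-one characterization'' you lean on is not the definition of $V^s(D)$ and is not established anywhere: the paper's Remark 2.4 offers only a $\liminf$ characterization, stated without proof, and your proposal neither proves it nor carries out the multi-scale synchronization that would let you apply it. So the central density claim is left unproved.

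The missing idea, which makes the whole obstacle evaporate, is that you never need one approximating tuple with infinitely many distal times. Write the distal set as $\bigcap_{q\ge0}\bigcup_{i\ge q}\{(x_1,\dots,x_n)\in[V^s(D)]^n\colon\min_{1\le j<k\le n}d(f^i(x_j),f^i(x_k))>\delta_n\}$ and prove, for each fixed $q$, that the inner union is open and \emph{dense}; since $[V^s(D)]^n$ is a $G_\delta$-subset of $X^n$, hence completely metrizable and Baire, the countable intersection over $q$ is then a dense $G_\delta$. Density for a fixed $q$ requires only \emph{one} detour: follow $a_l$ along its genuine orbit to a synchronized time $p+mc$ with $c\ge N$ and $p+2mc\ge q$, insert a $\delta$-chain of length $2mc$ (inside $C$, via property (P4)) passing through the $l$-th distinguished point of $D_n$ at its midpoint and ending at one common point $v\in f^{p+3mc}(D)$, then continue with the genuine orbit of $v$. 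This pseudo-orbit has only finitely many nonzero jumps, so it is trivially a $\delta$-limit-pseudo-orbit; its $\epsilon$-limit-shadowing point $w_l$ is $\epsilon$-close to $a_l$, is $\delta_n$-separated from the other $w_k$ at the single time $p+2mc\ge q$, and --- because its orbit is asymptotic to the genuine orbit of $v\in f^{p+3mc}(D)$ --- lies in $V^s(D)$ immediately from the definition, with no appeal to any alternative characterization. This one-detour-plus-Baire argument is precisely the paper's proof, and it is the step your proposal is missing.
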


The three theorems have the similar structure. For each $C\in\mathcal{C}(f)$, the first condition (1) (or (1')) describes a property of $C$. The second condition (2) describes a chaotic structure in $W^s(C)$. The third condition (3) is weaker than (2) and in fact implies (2).  Since
\[
\mathcal{F}_{\rm ud1}\subset\mathcal{F}_{\rm thick}\subset\mathcal{F}_{\rm iap}^\ast\subset\mathcal{F}_{\rm infinite},
\]
the three types of Li--Yorke chaos considered in Theorems 1.1, 1.2, and 1.3 are in order of strength. We remark that Li--Yorke type chaos with respect to $\mathcal{F}_{\rm ud1}$ (resp.\:$\mathcal{F}_{\rm infinite}$) are called {\em distributional chaos of type 1} \cite{SS} (resp.\:Li--Yorke chaos). Under the assumption of s-limit shadowing, the three theorems provide a detailed description of the chaotic structure in the basins of chain components corresponding to the structure of each chain component. As a consequence, we introduce a viewpoint of classifying (in general, uncountably many) chain components by strength of Li--Yorke type chaos in the basins of them.

This paper consists of six sections. Throughout, $X$ denotes a compact metric space endowed with a metric $d$. In Section 2, we introduce the $G_\delta$-partition of the phase space. In Section 3, we introduce Furstenberg families and corresponding Li--Yorke type chaos. In Section 4, we define the s-limit shadowing and prove a lemma. In Section 5, we prove the main theorems. Finally, in Section 6, we make some concluding remarks. 

\section{A $G_\delta$-partition of the phase space}

In this section, we introduce the $G_\delta$-partition of the phase space mentioned in Section 1. We begin with a definition.

\begin{defi}
\normalfont
Given a continuous map $f\colon X\to X$ and $\delta>0$, a finite sequence $(x_i)_{i=0}^{k}$ of points in $X$, where $k>0$ is a positive integer, is called a {\em $\delta$-chain} of $f$ if $d(f(x_i),x_{i+1})\le\delta$ for every $0\le i\le k-1$.  A $\delta$-chain $(x_i)_{i=0}^{k}$ of $f$ with $x_0=x_k$ is said to be a {\em $\delta$-cycle} of $f$. 
\end{defi}

Let $f\colon X\to X$ be a continuous map. For any $x,y\in X$ and $\delta>0$, the notation $x\rightarrow_\delta y$ means that there is a $\delta$-chain $(x_i)_{i=0}^k$ of $f$ with $x_0=x$ and $x_k=y$. We write $x\rightarrow y$ if $x\rightarrow_\delta y$ for all $\delta>0$. We say that $x\in X$ is a {\em chain recurrent point} for $f$ if $x\rightarrow x$, or equivalently, for any $\delta>0$, there is a $\delta$-cycle $(x_i)_{i=0}^{k}$ of $f$ with $x_0=x_k=x$. Let $CR(f)$ denote the set of chain recurrent points for $f$. We define a relation $\leftrightarrow$ in
\[
CR(f)^2=CR(f)\times CR(f)
\]
by: for any $x,y\in CR(f)$, $x\leftrightarrow y$ if and only if $x\rightarrow y$ and $y\rightarrow x$. Note that $\leftrightarrow$ is a closed equivalence relation in $CR(f)^2$ and satisfies $x\leftrightarrow f(x)$ for all $x\in CR(f)$. An equivalence class $C$ of $\leftrightarrow$ is called a {\em chain component} for $f$. We denote by $\mathcal{C}(f)$ the set of chain components for $f$.

A subset $S$ of $X$ is said to be $f$-invariant if $f(S)\subset S$. For an $f$-invariant subset $S$ of $X$, we say that $f|_S\colon S\to S$ is {\em chain transitive} if for any $x,y\in S$ and $\delta>0$, there is a $\delta$-chain $(x_i)_{i=0}^k$ of $f|_S$ with $x_0=x$ and $x_k=y$.

\begin{rem}
\normalfont
The following properties hold
\begin{itemize}
\item $CR(f)=\bigsqcup_{C\in\mathcal{C}(f)}C$,
\item Every $C\in\mathcal{C}(f)$ is a closed $f$-invariant subset of $CR(f)$,
\item $f|_C\colon C\to C$ is chain transitive for all $C\in\mathcal{C}(f)$,
\item For any $f$-invariant subset $S$ of $X$, if $f|_S\colon S\to S$ is chain transitive, then $S\subset C$ for some $C\in\mathcal{C}(f)$.
\end{itemize}
\end{rem}

\begin{rem}
\normalfont
We recall the so-called fundamental theorem of dynamical systems by Conley \cite{C} (see also \cite{H}). Given any continuous map $f\colon X\to X$, we say that a continuous function $\Lambda\colon X\to\mathbb{R}$ is a {\em complete Lyapunov function} for $f$ if the following conditions are satisfied
\begin{itemize}
\item $\Lambda(f(x))<\Lambda(x)$ for every $x\in X\setminus CR(f)$,
\item For any $x,y\in CR(f)$,  $\Lambda(x)=\Lambda(y)$ if and only if $x\leftrightarrow y$,
\item $\Lambda(CR(f))$ is a nowhere dense subset of $\mathbb{R}$.
\end{itemize}

\begin{theorem}
There is a complete Lyapunov function $\Lambda\colon X\to\mathbb{R}$ for $f$.
\end{theorem}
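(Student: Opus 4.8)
The plan is to prove the existence of a complete Lyapunov function by the classical attractor--repeller decomposition of Conley, in the form valid for continuous maps on compact metric spaces. Recall first that $\rightarrow$ descends to a partial order on $\mathcal{C}(f)$: declaring $C\preceq C'$ when $x\rightarrow y$ for some (equivalently every) $x\in C$ and $y\in C'$ gives a well-defined relation that is antisymmetric precisely because two distinct chain components are never mutually reachable. The entire task is to produce a continuous $\Lambda\colon X\to\mathbb{R}$ which (i) strictly decreases along $f$ off $CR(f)$, (ii) is constant on each chain component and takes distinct values on distinct ones, and (iii) has nowhere dense image on $CR(f)$. I would reduce (ii) and (iii) to a separation principle: there is a countable family of \emph{chain attractors} $\{A_k\}_{k\ge1}$ such that, for $x,y\in CR(f)$, one has $x\leftrightarrow y$ if and only if $x\in A_k\Leftrightarrow y\in A_k$ for every $k$.

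I would make the notion of chain attractor precise as follows. For $\delta>0$ and $x\in X$ put $\Omega_\delta(x)=\{y\in X:x\rightarrow_\delta y\}$ and $\Omega(x)=\{y\in X:x\rightarrow y\}=\bigcap_{\delta>0}\overline{\Omega_\delta(x)}$; then $\Omega_\delta(x)$ is open, and $\Omega(x)$ is closed and $f$-invariant. Call an open set $U$ a \emph{trapping region} if $f(\overline{U})\subset U$, call $A=\bigcap_{n\ge0}\overline{f^n(\overline{U})}$ the associated chain attractor, and call $A^\ast=\{x\in X:\Omega(x)\cap A=\emptyset\}$ its \emph{dual repeller}. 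Two structural facts drive the argument. Since $X$ is compact metric, hence second countable, trapping regions may be chosen from a countable base, so the chain attractors form a countable list $\{A_k\}$. Moreover, no chain-recurrent point lies in a connecting region: if $x\in CR(f)$ then $x\in A_k\cup A_k^\ast$ for every $k$, and each $A_k$ is a union of chain components; together these yield the separation principle above.

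For each $k$ I would construct an \emph{elementary Lyapunov function} $\ell_k\colon X\to[0,1]$ subordinate to the pair $(A_k,A_k^\ast)$, namely a continuous function with $\ell_k\equiv0$ on $A_k$, $\ell_k\equiv1$ on $A_k^\ast$, $\ell_k(f(x))\le\ell_k(x)$ for all $x\in X$, and $\ell_k(f(x))<\ell_k(x)$ whenever $x\notin A_k\cup A_k^\ast$. A natural starting point is the continuous separator $h_k(x)=d(x,A_k)/(d(x,A_k)+d(x,A_k^\ast))$, from which $\ell_k$ is obtained by a supremum/averaging construction over the forward orbit inside the basin of $A_k$, arranged so as to force strict decrease across the connecting region. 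Setting
\[
\Lambda(x)=\sum_{k\ge1}3^{-k}\ell_k(x),
\]
a uniformly convergent, hence continuous, series, I would then verify the three properties. Property (i) holds because any $x\notin CR(f)$ lies in the connecting region of some $A_k$, so $\ell_k(f(x))<\ell_k(x)$ while no term increases. Property (ii) holds by the separation principle: each $\ell_k$ takes only the values $0,1$ on $CR(f)$, so $x\leftrightarrow y$ forces equal digits and hence $\Lambda(x)=\Lambda(y)$, whereas distinct chain components differ in some digit and the expansions $\sum_k3^{-k}a_k$ with $a_k\in\{0,1\}$ are injective in $(a_k)$. Property (iii) follows at once, since $\Lambda(CR(f))$ is contained in the set $\{\sum_{k\ge1}3^{-k}a_k:a_k\in\{0,1\}\}$, a Cantor-type subset of $\mathbb{R}$ that is nowhere dense.

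I expect the main obstacle to be the construction and verification of the elementary Lyapunov functions $\ell_k$ at the level of pseudo-orbits. Ensuring that $\ell_k$ is genuinely continuous and \emph{strictly} decreasing throughout the connecting region $X\setminus(A_k\cup A_k^\ast)$ requires exploiting the trapping property together with uniform control of $\delta$-chains, and it is here that the chain (rather than genuine orbit) nature of $CR(f)$ must be handled with care. By contrast, the countability of the attractor family and the separation principle, though indispensable, are comparatively routine consequences of second countability and the definition of the dual repeller.
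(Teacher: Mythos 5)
Your overall architecture---countably many attractor--repeller pairs, an elementary Lyapunov function $\ell_k$ for each, and the weighted sum $\Lambda=\sum_{k\ge1}3^{-k}\ell_k$ whose values on $CR(f)$ land in a Cantor-type set---is exactly the classical Conley--Hurley argument, which is what the paper relies on (it states the theorem without proof, citing \cite{C} and \cite{H}). However, there is a genuine error, and it sits precisely in the step you call ``comparatively routine'': your definition of the dual repeller. You set $A^\ast=\{x\in X\colon\Omega(x)\cap A=\emptyset\}$, where $\Omega(x)$ is the set of points chain-reachable from $x$, and with this definition both structural facts driving the argument fail. Concretely, take $X=[0,1]$ and $f(t)=t^2$, and let $A=\{0\}$ be the attractor with trapping region $U=[0,\epsilon)$. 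A $\delta$-chain may step from $1$ down to $1-\delta$ and then follow the true orbit of $1-\delta$ into $[0,\delta]$, so $0\in\Omega(1)$ and hence $1\notin A^\ast$; in fact $\Omega(x)\ni0$ for every $x$, so $A^\ast=\emptyset$. Thus $1\in CR(f)$ but $1\notin A\cup A^\ast$: the claim ``$x\in CR(f)$ implies $x\in A_k\cup A_k^\ast$ for every $k$'' is false, so the digit argument behind your properties (ii) and (iii) loses its justification ($\ell_k$ need not take only the values $0,1$ on $CR(f)$). Worse, the elementary Lyapunov function you require cannot exist at all: the point $1$ lies in your connecting region $X\setminus(A\cup A^\ast)$ but is a fixed point, so no continuous $\ell_k$ can satisfy $\ell_k(f(1))<\ell_k(1)$.

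The repair is to define the dual repeller through true orbits rather than chains: $A^\ast=X\setminus B(A)$, where $B(A)=\bigcup_{n\ge0}f^{-n}(U)$ is the open basin of the trapping region, equivalently $A^\ast=\{x\in X\colon\omega(x,f)\cap A=\emptyset\}$. This $A^\ast$ is compact and $f$-invariant, and in the example above it equals $\{1\}$, as it should. With this definition the needed facts do hold: a chain-recurrent point of $B(A)$ must lie in $A$ (for small $\delta$, a $\delta$-cycle through it eventually enters and is then trapped in $\overline{U}$, forcing the point into $\bigcap_{n\ge0}f^n(\overline{U})=A$), so $CR(f)\subset A\cup A^\ast$; a point $x\notin CR(f)$ avoids $A\cup A^\ast$ for the attractor generated by a suitable trapping neighborhood of the chain-reachable set of $f(x)$; and your sup-plus-series construction goes through precisely because every $x\notin A^\ast$ has its genuine forward orbit converging to $A$, which is what gives both the continuity of $V(x)=\sup_{n\ge0}h_k(f^n(x))$ and the strict decrease of $\ell_k(x)=\sum_{n\ge0}2^{-n-1}V(f^n(x))$ off $A\cup A^\ast$. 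With that single correction your proof becomes the standard one in the references the paper cites.
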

\end{rem}

Let $C\in\mathcal{C}(f)$ and let $g=f|_C\colon C\to C$. Given any $\delta>0$, the {\em length} of a $\delta$-cycle $(x_i)_{i=0}^k$ of $g$ is defined to be $k$. Let $m=m(C,\delta)>0$ be the greatest common divisor of the lengths of all $\delta$-cycles of $g$. A relation $\sim_{C,\delta}$ in $C^2$ is defined by: for any $x,y\in C$, $x\sim_{C,\delta}y$ if and only if there is a $\delta$-chain $(x_i)_{i=0}^k$ of $g$ with $x_0=x$, $x_k=y$, and $m|k$. 

\begin{rem}
\normalfont
The following properties hold 
\begin{itemize}
\item[(P1)] $\sim_{C,\delta}$ is an open and closed $(g\times g)$-invariant equivalence relation in $C^2=C\times C$,
\item[(P2)] Any $x,y\in C$ with $d(x,y)\le\delta$ satisfies $x\sim_{C,\delta}y$, so for every $\delta$-chain $(x_i)_{i=0}^k$ of $g$, we have $g(x_i)\sim_{C,\delta}x_{i+1}$ for each $0\le i\le k-1$, implying $x_i\sim_{C,\delta}g^i(x_0)$ for every $0\le i\le k$,
\item[(P3)] For any $x\in C$ and $n\ge0$, $x\sim_{C,\delta}g^{mn}(x)$,
\item[(P4)] There exists $N>0$ such that for any $x,y\in C$ with $x\sim_{C,\delta}y$ and $n\ge N$, there is a $\delta$-chain $(x_i)_{i=0}^k$ of $g$ with $x_0=x$, $x_k=y$, and $k=mn$.
\end{itemize}
A proof of property (P2) can be found in \cite{K3}. Property (P4) is due to Lemma 2.3 of \cite{BMR} and in fact a consequence of a Schur's theorem implying that for any positive integers $m,k_1,k_2,\dots,k_n$, if
\[
\gcd(k_1,k_2,\dots,k_n)=m,
\]
then every sufficiently large multiple $M$ of $m$ can be expressed as a linear combination
\[
M=a_1k_1+a_2k_2+\cdots+a_nk_n
\]
where $a_1,a_2,\dots,a_n$ are non-negative integers. 
\end{rem}

Fix $x\in C$ and let $D_i$, $i\ge0$, denote the equivalence class of $\sim_{C,\delta}$ including $g^i(x)$. Then, $D_m=D_0$, and
\[
C=\bigsqcup_{i=0}^{m-1}D_i
\]
gives the partition of $C$ into the equivalence classes of $\sim_{C,\delta}$. Note that every $D_i$, $0\le i\le m-1$, is an open and closed subset of $C$ and satisfies $g(D_i)=D_{i+1}$. We call
\[
\mathcal{D}(C,\delta)=\{D_i\colon 0\le i\le m-1\}
\]
the {\em $\delta$-cyclic decomposition} of $C$.

\begin{defi}
\normalfont
We define a relation $\sim_C$ in $C^2$ by: for any $x,y\in C$, $x\sim_C y$ if and only if $x\sim_{C,\delta}y$ for all $\delta>0$, which is a closed $(g\times g)$-invariant equivalence relation in $C^2$. We denote by $\mathcal{D}(C)$ the set of equivalence classes of $\sim_C$.
\end{defi}

\begin{rem}
\normalfont
The relation $\sim_C$ was introduced in \cite{S} and rediscovered in \cite{RW} (based on the argument given in \cite[Exercise 8.22]{A}). We say that $(x,y)\in C^2$ is a {\em chain proximal pair} for $g$ if for any $\delta>0$, there is a pair of $\delta$-chains
\[
((x_i)_{i=0}^k,(y_i)_{i=0}^k)
\]
of $g$ with $(x_0,y_0)=(x,y)$ and $x_k=y_k$. As stated in Remark 8 of \cite{RW}, it holds that for any $x,y\in C$, $x\sim_C y$ if and only if $(x,y)$ is a chain proximal pair for $g$. By this equivalence, we say that $\sim_C$ is a {\em chain proximal relation} for $g$.
\end{rem}

Given any continuous map $f\colon X\to X$, let
\[
W^s(C)=\{x\in X\colon\lim_{i\to\infty}d(f^i(x),C)=0\}
\]
for all $C\in\mathcal{C}(f)$.  Note that $C\subset W^s(C)$ for every $C\in\mathcal{C}(f)$.

\begin{lem}
We have the following properties
\begin{itemize}
\item
\[
X=\bigsqcup_{C\in\mathcal{C}(f)}W^s(C),
\]
\item $W^s(C)$ is a $G_\delta$-subset of $X$ for every $C\in\mathcal{C}(f)$.
\end{itemize}
\end{lem}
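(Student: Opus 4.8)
The plan is to route both statements through a complete Lyapunov function $\Lambda\colon X\to\mathbb{R}$ for $f$, whose existence is the Theorem in Remark 2.2. Two elementary consequences of its defining properties will do all the work. First, $\Lambda$ is non-increasing along orbits: $\Lambda(f(y))\le\Lambda(y)$ for every $y\in X$, with equality precisely when $y\in CR(f)$ (for $y\notin CR(f)$ this is the first property, and for $y\in CR(f)$ we have $y\leftrightarrow f(y)$, so the second property forces equality). Second, $\Lambda$ is constant on each chain component $C$ — write $\Lambda(C)$ for this value — and takes distinct values on distinct chain components, since for $x,y\in CR(f)$ the equality $\Lambda(x)=\Lambda(y)$ is equivalent to $x\leftrightarrow y$.

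First I would analyze a single orbit. Given $x\in X$, monotonicity shows that $a_i:=\Lambda(f^i(x))$ decreases to $L:=\inf_i a_i=\lim_i a_i$. By compactness $\omega(x)\neq\emptyset$, and continuity of $\Lambda$ forces $\Lambda\equiv L$ on $\omega(x)$; combining invariance of $\omega(x)$ with the strict-decrease property off $CR(f)$ gives $\omega(x)\subset CR(f)$, since a point $z\in\omega(x)\setminus CR(f)$ would yield $\Lambda(f(z))<\Lambda(z)=L$ while $f(z)\in\omega(x)$ forces $\Lambda(f(z))=L$. Hence any two points of $\omega(x)$ lie in $CR(f)$ and share the value $L$, so they are $\leftrightarrow$-equivalent; thus $\omega(x)$ lies in a single chain component $C'$ with $\Lambda(C')=L$. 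A standard compactness argument gives $d(f^i(x),\omega(x))\to0$, whence $d(f^i(x),C')\to0$ and $x\in W^s(C')$; this proves the covering $X=\bigcup_{C}W^s(C)$. For disjointness, if $x\in W^s(C)$ then every point of $\omega(x)$ lies in the closed set $C$, so $\omega(x)\subset C$, and since $\omega(x)\neq\emptyset$ while distinct chain components are disjoint, $x$ lies in only one $W^s(C)$. Putting these together, and using injectivity of $\Lambda$ on chain components, yields the clean characterization $W^s(C)=\{x\in X\colon \lim_i\Lambda(f^i(x))=\Lambda(C)\}$.

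The main obstacle is the $G_\delta$ claim, because written directly the condition $d(f^i(x),C)\to0$ unfolds as $\bigcap_k\bigcup_N\bigcap_{i\ge N}\{x\colon d(f^i(x),C)<1/k\}$, an $F_{\sigma\delta}$-type expression that is not visibly $G_\delta$. The point is that the Lyapunov characterization collapses it. Writing $c=\Lambda(C)$ and using that $a_i$ is non-increasing, so that $L=\inf_i a_i$, I would split
\[
W^s(C)=\{x\colon L\le c\}\cap\{x\colon L\ge c\}.
\]
The first factor equals $\bigcap_{k\ge1}\bigcup_{i\ge0}\{x\colon\Lambda(f^i(x))<c+1/k\}$, a countable intersection of open sets and hence $G_\delta$; the second equals $\bigcap_{i\ge0}\{x\colon\Lambda(f^i(x))\ge c\}$, a closed set, which in the metric space $X$ is again $G_\delta$. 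A finite intersection of $G_\delta$ sets is $G_\delta$, so $W^s(C)$ is $G_\delta$, completing the proof.
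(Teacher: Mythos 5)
Your proof is correct, but it takes a genuinely different route from the paper's. The paper never invokes the Lyapunov function: for the partition it uses the fact (Remark 2.1) that $f$ restricted to $\omega(x,f)$ is chain transitive, so that $\omega(x,f)$ lies in a single chain component; for the $G_\delta$ claim it observes that distinct chain components are a positive distance apart, so that $x\in W^s(C')$ with $C'\ne C$ forces $\liminf_{i\to\infty}d(f^i(x),C)\ge d(C,C')>0$, and hence
\[
W^s(C)=\Bigl\{x\in X\colon\liminf_{i\to\infty}d(f^i(x),C)=0\Bigr\}=\bigcap_{l\ge1}\bigcap_{q\ge0}\bigcup_{i\ge q}\Bigl\{x\in X\colon d(f^i(x),C)<\tfrac{1}{l}\Bigr\},
\]
which is manifestly $G_\delta$. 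In other words, the collapse of the $F_{\sigma\delta}$ expression that you obtain from monotonicity of $\Lambda\circ f^i$, the paper obtains by replacing $\lim$ with $\liminf$. Your route costs more in prerequisites --- it rests entirely on Conley's theorem, which the paper states only as background in Remark 2.2 --- but it buys the clean identity $W^s(C)=\{x\in X\colon\lim_{i\to\infty}\Lambda(f^i(x))=\Lambda(C)\}$ and even exhibits $W^s(C)$ as the intersection of a closed set with a countable intersection of open sets, slightly finer information than $G_\delta$. The paper's route is elementary and self-contained, and its $\liminf$ trick is the one that carries over to Lemma 2.2: there the sets $V^s(D)$, $D\in\mathcal{D}(C)$, all sit inside a single chain component, on which any Lyapunov function is constant, so your method has no traction, whereas the separation argument (applied to the sets $f^i(D_\delta)$ coming from the cyclic decompositions) still works verbatim.
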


\begin{proof}
For $x\in X$, the {\em $\omega$-limit set} $\omega(x,f)$ of $x$ for $f$ is defined to be the set of $y\in X$ such that $\lim_{j\to\infty}f^{i_j}(x)=y$ for some sequence $0\le i_1<i_2<\cdots$. Note that $\omega(x,f)$ is a closed $f$-invariant subset of $X$ and satisfies
\[
\lim_{i\to\infty}d(f^i(x),\omega(x,f))=0.
\]
Since $f|_{\omega(x,f)}\colon\omega(x,f)\to\omega(x,f)$ is chain transitive, $\omega(x,f)\subset C$ for some $C\in\mathcal{C}(f)$. Then, for any $C\in\mathcal{C}(f)$ and $x\in X$, $x\in W^s(C)$ if and only if $\omega(x,f)\subset C$. It follows that
\[
X=\bigsqcup_{C\in\mathcal{C}(f)}W^s(C).
\]
For any non-empty closed subsets $A,B$ of $X$, let
\[
d(A,B)=\min_{x\in A,y\in B}d(x,y).
\]
For any $C,C'\in\mathcal{C}(f)$ and $x\in X$, if $x\in W^s(C')$ and $C\ne C'$, then
\[
\liminf_{i\to\infty}d(f^i(x),C)\ge d(C,C')>0,
\]
so $x\in W^s(C)$ if and only if
\[
\liminf_{i\to\infty}d(f^i(x),C)=0.
\]
We obtain
\[
W^s(C)=\{x\in X\colon\liminf_{i\to\infty}d(f^i(x),C)=0\}=\bigcap_{l\ge1}\bigcap_{q\ge0}\bigcup_{i\ge q}\{x\in X\colon d(f^i(x),C)<\frac{1}{l}\};
\]
therefore, $W^s(C)$ is a $G_\delta$-subset of $X$, proving the lemma.
\end{proof}

Let $C\in\mathcal{C}(f)$ and let $D\in\mathcal{D}(C)$. By the definition of $\sim_C$, for any $\delta>0$, there is unique $D_\delta\in\mathcal{D}(C,\delta)$ such that $D\subset D_\delta$. Then, $D_{\delta_1}\subset D_{\delta_2}$ for all $0<\delta_1<\delta_2$, and
\[
D=\bigcap_{\delta>0}D_\delta.
\]
Let
\[
V^s(D)=\bigcap_{\delta>0}\{x\in W^s(C)\colon\lim_{i\to\infty}d(f^i(x),f^i(D_{\delta}))=0\}
\]
for all $D\in\mathcal{D}(C)$. Note that $D\subset V^s(D)$ for every $D\in\mathcal{D}(C)$.

\begin{lem}
We have the following properties
\begin{itemize}
\item
\[
W^s(C)=\bigsqcup_{D\in\mathcal{D}(C)}V^s(D),
\]
\item $V^s(D)$ is a $G_\delta$-subset of $W^s(C)$ for every $D\in\mathcal{D}(C)$.
\end{itemize}
\end{lem}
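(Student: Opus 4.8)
The plan is to reduce everything to a single tracking statement at each scale $\delta>0$. Fix $C\in\mathcal{C}(f)$, write $g=f|_C$, and recall the clopen decomposition $\mathcal{D}(C,\delta)=\{D_0,\dots,D_{m-1}\}$ with $m=m(C,\delta)$ and $g(D_j)=D_{j+1}$ (indices mod $m$). Since these finitely many pieces are compact and pairwise disjoint, $\rho:=\min_{j\ne k}d(D_j,D_k)>0$. First I would establish a \emph{tracking lemma}: for every $x\in W^s(C)$ there is a unique $E=E_\delta(x)\in\mathcal{D}(C,\delta)$ with $\lim_{i\to\infty}d(f^i(x),f^i(E))=0$, while $\liminf_{i\to\infty}d(f^i(x),f^i(E'))\ge\rho$ for every other $E'\in\mathcal{D}(C,\delta)$. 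This holds because $d(f^i(x),C)\to0$ forces $f^i(x)$, for all large $i$, to lie within $\rho/3$ of exactly one piece $D_{p(i)}$; uniform continuity of $f$ together with $g(D_j)=D_{j+1}$ gives $p(i+1)=p(i)+1$, so $p(i)$ advances with a fixed phase and $E$ is the piece realizing that phase at time $0$, whence $d(f^i(x),f^i(E))=d(f^i(x),C)\to0$ and every wrong phase stays at distance $\ge\rho-o(1)$.

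For the partition I would first note that $\mathcal{D}(C,\delta_1)$ refines $\mathcal{D}(C,\delta_2)$ whenever $\delta_1<\delta_2$: every $\delta_1$-cycle is a $\delta_2$-cycle, so $m(C,\delta_2)\mid m(C,\delta_1)$ and hence $x\sim_{C,\delta_1}y\Rightarrow x\sim_{C,\delta_2}y$; this is exactly the nesting $D_{\delta_1}\subset D_{\delta_2}$ recorded before the lemma. Combining refinement with the uniqueness in the tracking lemma shows the tracked pieces are nested, $E_{\delta_1}(x)\subset E_{\delta_2}(x)$. By compactness $D(x):=\bigcap_{\delta>0}E_\delta(x)$ is a nonempty set which is $\sim_C$-saturated and in which any two points are $\sim_C$-related, i.e.\ $D(x)\in\mathcal{D}(C)$; since $D(x)\subset E_\delta(x)$, the set $E_\delta(x)$ is the $\delta$-piece containing $D(x)$, and therefore $x\in V^s(D(x))$. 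This gives $W^s(C)=\bigcup_{D\in\mathcal{D}(C)}V^s(D)$. Disjointness is the uniqueness clause: if $x\in V^s(D)\cap V^s(D')$ with $D\ne D'$, choose $\delta$ with $D_\delta\ne D'_\delta$; then $d(f^i(x),f^i(D_\delta))\to0$ and $d(f^i(x),f^i(D'_\delta))\to0$, while $f^i(D_\delta)$ and $f^i(D'_\delta)$ remain distinct pieces at distance $\ge\rho$, a contradiction.

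For the $G_\delta$ property I would write $V^s(D)=\bigcap_{\delta>0}A_\delta$ with $A_\delta=\{x\in W^s(C):\lim_{i\to\infty}d(f^i(x),f^i(D_\delta))=0\}$. Since $D_{\delta_1}\subset D_{\delta_2}$ gives $f^i(D_{\delta_1})\subset f^i(D_{\delta_2})$ and hence $A_{\delta_1}\subset A_{\delta_2}$, the intersection may be taken over a sequence $\delta_k\downarrow0$, so it suffices to show each $A_\delta$ is $G_\delta$ in $W^s(C)$. This is the step I expect to be the main obstacle: expanding $\lim=0$ directly yields $\bigcap_l\bigcup_q\bigcap_{i\ge q}\{d(f^i(x),f^i(D_\delta))<1/l\}$, an expression of the form $\bigcap\bigcup(G_\delta)$ that is \emph{not} manifestly $G_\delta$. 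The resolution is the dichotomy furnished by the tracking lemma: for $x\in W^s(C)$ the quantity $d(f^i(x),f^i(D_\delta))$ either tends to $0$ (when $x$ tracks $D_\delta$) or has $\liminf\ge\rho$ (otherwise), so on $W^s(C)$ the conditions $\lim_{i}d(f^i(x),f^i(D_\delta))=0$ and $\liminf_{i}d(f^i(x),f^i(D_\delta))=0$ coincide. The latter can be written as
\[
A_\delta=\bigcap_{q\ge0}\bigcap_{l\ge1}\bigcup_{i\ge q}\{x\in W^s(C):d(f^i(x),f^i(D_\delta))<1/l\},
\]
a countable intersection of open subsets of $W^s(C)$, hence $G_\delta$. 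As a countable intersection of $G_\delta$ sets is again $G_\delta$, this yields that $V^s(D)$ is $G_\delta$ in $W^s(C)$, completing the proof.
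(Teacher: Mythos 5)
Your proposal is correct and follows essentially the same route as the paper's own proof: a unique tracked piece $D_\delta(x)\in\mathcal{D}(C,\delta)$ for each $x\in W^s(C)$, the nested intersection $D(x)=\bigcap_{\delta>0}D_\delta(x)\in\mathcal{D}(C)$ giving the partition, separation of distinct cyclic pieces giving disjointness, and the replacement of $\lim$ by $\liminf$ (valid on $W^s(C)$ by the tracking dichotomy) to exhibit $V^s(D)$ as a countable intersection of open sets. The only difference is one of detail: you prove the tracking statement and the refinement $D_{\delta_1}\subset D_{\delta_2}$ explicitly, which the paper asserts without proof.
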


\begin{proof}
For any $D,D'\in\mathcal{D}(C)$, $x\in V^s(D)$, and $y\in V^s(D')$, if $D\ne D'$, then by taking $\delta>0$ with $D_\delta\ne D'_\delta$, we have
\[
\liminf_{i\to\infty}d(f^i(x),f^i(y))\ge\inf_{i\ge0}d(f^i(D_\delta),f^i(D'_\delta))>0.
\]
This implies $V^s(D)\cap V^s(D')=\emptyset$ for all $D,D'\in\mathcal{D}(C)$ with $D\ne D'$. Given any $x\in W^s(C)$ and $\delta>0$, we have
\[
\lim_{i\to\infty}d(f^i(x),D_{\delta}(x))=0
\]
for unique $D_{\delta}(x)\in\mathcal{D}(C,\delta)$. Since $D_{\delta_1}(x)\subset D_{\delta_2}(x)$ for all $0<\delta_1<\delta_2$, letting
\[
D(x)=\bigcap_{\delta>0}D_{\delta}(x),
\]
we obtain $D(x)\in\mathcal{D}(C)$ and $x\in V^s(D(x))$. Since $x\in W^s(C)$ is arbitrary, it follows that
\[
W^s(C)=\bigsqcup_{D\in\mathcal{D}(C)}V^s(D).
\]
For any $D,D'\in\mathcal{D}(C)$ and $x\in V^s(D')$, if $D\ne D'$, then by taking $\delta>0$ with $D_\delta\ne D'_\delta$, we have
\[
\liminf_{i\to\infty}d(f^i(x),f^i(D_\delta))\ge\inf_{i\ge0}d(f^i(D_\delta),f^i(D'_\delta))>0.
\]
It follows that for any $x\in W^s(C)$ and $D\in\mathcal{D}(C)$, $x\in V^s(D)$ if and only if
\[
\liminf_{i\to\infty}d(f^i(x),f^i(D_\delta))=0
\]
for all $\delta>0$. We obtain
\begin{align*}
V^s(D)&=\bigcap_{\delta>0}\{x\in W^s(C)\colon\liminf_{i\to\infty}d(f^i(x),f^i(D_\delta))=0\}\\
&=\bigcap_{n\ge1}\bigcap_{l\ge1}\bigcap_{q\ge0}\bigcup_{i\ge q}\{x\in W^s(C)\colon d(f^i(x),f^i(D_\frac{1}{n}))<\frac{1}{l}\};
\end{align*}
therefore, $V^s(D)$ is a $G_\delta$-subset of $W^s(C)$, proving the lemma.
\end{proof}

\begin{rem}
\normalfont
We can show that for any $C\in\mathcal{C}(f)$ and $D\in\mathcal{D}(C)$,
\[
V^s(D)=\{x\in W^s(C)\colon\liminf_{i\to\infty}d(f^i(x),f^i(D))=0\}.
\]
\end{rem}

By the above two lemmas, we obtain
\[
X=\bigsqcup_{C\in\mathcal{C}(f)}W^s(C)=\bigsqcup_{C\in\mathcal{C}(f)}\bigsqcup_{D\in\mathcal{D}(C)}V^s(D).
\]
Moreover, $V^s(D)$ is a $G_\delta$-subset of $X$ for all $C\in\mathcal{C}(f)$ and $D\in\mathcal{D}(C)$.

\section{Furstenberg families and chaos}

In this section, we introduce Furstenberg families and corresponding Li--Yorke type chaos. Let $\mathbb{N}_0=\{0\}\cup\mathbb{N}=\{0,1,2,\dots\}$ and let $\mathcal{F}\subset2^{\mathbb{N}_0}$. We say that
$\mathcal{F}$ is a {\em Furstenberg family} if the following conditions are satisfied
\begin{itemize}
\item (hereditary upward) For any $A,B\subset\mathbb{N}_0$, $A\in\mathcal{F}$ and $A\subset B$ implies $B\in\mathcal{F}$,
\item (proper) $\mathcal{F}\ne\emptyset$ and $\mathcal{F}\ne2^{\mathbb{N}_0}$.
\end{itemize}

For any Furstenberg family $\mathcal{F}$, we define its dual family $\mathcal{F}^\ast$ by
\[
\mathcal{F}^\ast=\{A\subset\mathbb{N}_0\colon A\cap B\ne\emptyset\:\:\text{for all $B\in\mathcal{F}$}\},
\]
which is also a Furstenberg family.
\begin{rem}
\normalfont
The idea of combining families of subsets of $\mathbb{N}_0$ with dynamical properties of maps was first used by Gottschalk and Hedlund \cite{GH}; and then further developed by Furstenberg \cite{F} (see \cite{LY1} and references therein).
\end{rem}

We define the four Furstenberg families: $\mathcal{F}_{\rm ud1}$, $\mathcal{F}_{\rm thick}$, $\mathcal{F}_{\rm iap}$, $\mathcal{F}_{\rm infinite}$.

\begin{itemize}
\item For any $A\subset\mathbb{N}_0$, we denote by $\overline{d}(A)$ the {\em upper density} of $A$:
\[
\overline{d}(A)=\limsup_{n\to\infty}\frac{1}{n}|A\cap\{0,1,\dots,n-1\}|,
\]
here $|\cdot|$ is the cardinality of the set.
\[
\mathcal{F}_{\rm ud1}=\{A\subset\mathbb{N}_0\colon\overline{d}(A)=1\}.
\]
\item
\[
\mathcal{F}_{\rm thick}=\{A\subset\mathbb{N}_0\colon\{i_j,i_j+1,\dots,i_j+j-1\}\subset A\:\:\text{for all $j\ge1$ for some $i_j\ge0$}\}.
\]
\item For integers $p\ge0$ and $m\ge1$, we define $\langle p,m\rangle$ by
\[
\langle p,m\rangle=\{p+qm\colon q\ge0\}=\{p, p+m,p+2m,\dots\}.
\]
\[
\mathcal{F}_{\rm iap}=\{A\subset\mathbb{N}_0\colon\langle p,m\rangle\subset A\:\:\text{for some $p\ge0$ and some $m\ge1$}\}.
\]
\item
\[
\mathcal{F}_{\rm infinite}=\{A\subset\mathbb{N}_0\colon |A|=\infty\}.
\]
\end{itemize}

\begin{rem}
\normalfont
\begin{itemize}
\item[(1)]Note that
\[
\mathcal{F}_{\rm ud1}\subset\mathcal{F}_{\rm thick}\subset\mathcal{F}_{\rm iap}^\ast\subset\mathcal{F}_{\rm infinite}.
\]
\item[(2)] Let
\[
X=S^1=\{z\in\mathbb{C}\colon|z|=1\}.
\]
Fix an irrational number $\alpha>0$ and let
\[
f=R_\alpha\colon X\to X
\]
be the irrational rotation defined by
$f(z)=ze^{2\pi i\alpha}$ for all $z\in X$. We let
\[
U=\{z\in X\colon\Re z<0\}
\]
and
\[
A=\{i\in\mathbb{N}_0\colon f^i(1)\in U\}.
\]
For any $p\ge0$ and $m\ge1$, since $f^m\colon X\to X$ is minimal, we have
\[
f^{p+qm}(1)=f^{qm}(f^p(1))\in U
\]
for some $q\ge0$. This implies $A\in\mathcal{F}_{\rm iap}^\ast$. On the other hand, since $f\colon X\to X$ is minimal, we know that for any $x\in X$ and a non-empty open subset $V$ of $X$,
\[
\{i\in\mathbb{N}_0\colon f^i(x)\in V\}
\]
has bounded gaps, that is, there is $k\ge1$ such that
\[
\{i\in\mathbb{N}_0\colon f^i(x)\in V\}\cap\{j,j+1,\dots,j+k-1\}\ne\emptyset
\]
for all $j\ge0$. Letting $x=1$ and
\[
V=\{z\in X\colon\Re z>0\},
\]
we see that $A\not\in\mathcal{F}_{\rm thick}$, thus $\mathcal{F}_{\rm iap}^\ast\setminus\mathcal{F}_{\rm thick}\ne\emptyset$.
\end{itemize}
\end{rem}

We consider Li--Yorke type chaos corresponding to the above four Furstenberg families. The  notion of chaos via Furstenberg families was introduced in \cite{TX,XLT} (see also \cite{LY1}). Let $f\colon X\to X$ be a continuous map. For any $n\ge2$ and $x_1,x_2,\dots, x_n\in X$, we define
\[
S_f(x_1,x_2,\dots,x_n;r)=\{i\in\mathbb{N}_0\colon\min_{1\le j<k\le n}d(f^i(x_j),f^i(x_k))>r\}
\] 
and
\[
T_f(x_1,x_2,\dots,x_n;r)=\{i\in\mathbb{N}_0\colon\max_{1\le j<k\le n}d(f^i(x_j),f^i(x_k))<r\}
\]
for all $r>0$.

\begin{lem}
Let $Y$ be a non-empty subset of $X$. For any $\delta>0$,
\[
\{(x_1,x_2,\dots,x_n)\in Y^n\colon S_f(x_1,x_2,\dots,x_n;\delta)\in\mathcal{F}\}
\]
is a $G_\delta$-subset of $Y^n$ for each $\mathcal{F}\in\{\mathcal{F}_{\rm ud1},\mathcal{F}_{\rm iap}^\ast,\mathcal{F}_{\rm infinite}\}$. Moreover,
\[
\bigcap_{\epsilon>0}\{(x_1,x_2,\dots,x_n)\in Y^n\colon T_f(x_1,x_2,\dots,x_n;\epsilon)\in\mathcal{F}\}
\]
is a $G_\delta$-subset of $Y^n$ for each $\mathcal{F}\in\{\mathcal{F}_{\rm ud1},\mathcal{F}_{\rm iap}^\ast,\mathcal{F}_{\rm infinite}\}$.
\end{lem}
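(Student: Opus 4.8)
The plan is to fix $\delta>0$ and, for each $i\in\mathbb{N}_0$, introduce the set
\[
U_i=\{(x_1,\dots,x_n)\in Y^n\colon\min_{1\le j<k\le n}d(f^i(x_j),f^i(x_k))>\delta\},
\]
which is an open subset of $Y^n$ because $f^i$ is continuous and the minimum of finitely many continuous functions is continuous. Writing $\mathbf{x}=(x_1,\dots,x_n)$, we have $i\in S_f(\mathbf{x};\delta)$ if and only if $\mathbf{x}\in U_i$, so the whole argument reduces to expressing the condition ``$S_f(\mathbf{x};\delta)\in\mathcal{F}$'' as a countable combination of the open sets $U_i$, and then reading off that the result is $G_\delta$.

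First I would dispatch the two easy families. For $\mathcal{F}_{\rm infinite}$, the set $S_f(\mathbf{x};\delta)$ is infinite exactly when for every $q\ge0$ there is $i\ge q$ with $\mathbf{x}\in U_i$, giving $\bigcap_{q\ge0}\bigcup_{i\ge q}U_i$, a $G_\delta$-set. For $\mathcal{F}_{\rm iap}^\ast$, I would first note that, since every element of $\mathcal{F}_{\rm iap}$ contains some arithmetic progression $\langle p,m\rangle$ and each $\langle p,m\rangle$ itself lies in $\mathcal{F}_{\rm iap}$, a set $A$ belongs to $\mathcal{F}_{\rm iap}^\ast$ if and only if $A\cap\langle p,m\rangle\ne\emptyset$ for all $p\ge0$ and $m\ge1$. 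This translates into $\bigcap_{p\ge0}\bigcap_{m\ge1}\bigcup_{q\ge0}U_{p+qm}$, again a $G_\delta$-set.

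The hard part will be $\mathcal{F}_{\rm ud1}$, since upper density $1$ is an asymptotic condition rather than a ``meets/infinite'' one. Setting $a_N(\mathbf{x})=\frac1N|S_f(\mathbf{x};\delta)\cap\{0,\dots,N-1\}|$, the key reformulation is that $\overline{d}(S_f(\mathbf{x};\delta))=1$ (equivalently $\ge1$, as $a_N\le1$) holds if and only if for every $l\ge1$ there are infinitely many $N$ with $a_N(\mathbf{x})>1-\frac1l$. What makes this usable is that each threshold set $\{\mathbf{x}\in Y^n\colon a_N(\mathbf{x})>1-\frac1l\}$ is open: since each $\mathbf{1}_{U_i}$ is the indicator of an open set, $a_N$ is lower semicontinuous, and concretely the set equals the finite union, over all $I\subset\{0,\dots,N-1\}$ whose cardinality exceeds the integer threshold $N(1-\frac1l)$, of the finite intersections $\bigcap_{i\in I}U_i$. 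Hence the target set is $\bigcap_{l\ge1}\bigcap_{M\ge1}\bigcup_{N\ge M}\{\mathbf{x}\in Y^n\colon a_N(\mathbf{x})>1-\frac1l\}$, which is $G_\delta$.

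Finally, for the $T_f$ statement I would define the open sets $W_i=\{(x_1,\dots,x_n)\in Y^n\colon\max_{1\le j<k\le n}d(f^i(x_j),f^i(x_k))<\epsilon\}$ and run the three cases above verbatim to conclude that, for each fixed $\epsilon>0$, the set $\{\mathbf{x}\in Y^n\colon T_f(\mathbf{x};\epsilon)\in\mathcal{F}\}$ is $G_\delta$. To replace the uncountable intersection $\bigcap_{\epsilon>0}$ by a countable one, I would use that $T_f(\mathbf{x};\epsilon)$ is nondecreasing in $\epsilon$ together with the hereditary-upward property of $\mathcal{F}$: if $T_f(\mathbf{x};\frac1n)\in\mathcal{F}$ and $\frac1n\le\epsilon$, then $T_f(\mathbf{x};\frac1n)\subset T_f(\mathbf{x};\epsilon)$ forces $T_f(\mathbf{x};\epsilon)\in\mathcal{F}$. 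This gives $\bigcap_{\epsilon>0}\{\mathbf{x}\in Y^n\colon T_f(\mathbf{x};\epsilon)\in\mathcal{F}\}=\bigcap_{n\ge1}\{\mathbf{x}\in Y^n\colon T_f(\mathbf{x};\frac1n)\in\mathcal{F}\}$, a countable intersection of $G_\delta$-sets and therefore $G_\delta$, completing the proof.
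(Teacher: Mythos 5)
Your proof is correct and follows essentially the same route as the paper: you express each set via the identical countable intersections and unions of open sets (density thresholds along infinitely many $N$ for $\mathcal{F}_{\rm ud1}$, meeting every progression $\langle p,m\rangle$ for $\mathcal{F}_{\rm iap}^\ast$, tails for $\mathcal{F}_{\rm infinite}$, and $\epsilon=\frac{1}{l}$ for the $T_f$ part). The only difference is that you spell out details the paper leaves implicit, namely the openness of the threshold sets and the monotonicity-plus-hereditary-upward argument reducing $\bigcap_{\epsilon>0}$ to a countable intersection.
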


\begin{proof}
The first claim follows from the following equations.

\begin{align*}
\bullet\:\:&\{(x_1,x_2,\dots,x_n)\in Y^n\colon S_f(x_1,x_2,\dots,x_n;\delta)\in\mathcal{F}_{\rm ud1}\}=\bigcap_{p\ge1}\bigcap_{q\ge1}\bigcup_{m\ge q}\\&\{(x_1,x_2,\dots,x_n)\in Y^n\colon\frac{1}{m}|\{0\le i\le m-1\colon\min_{1\le j<k\le n}d(f^i(x_j),f^i(x_k))>\delta\}|>1-\frac{1}{p}\}.\\
\bullet\:\:&\{(x_1,x_2,\dots,x_n)\in Y^n\colon S_f(x_1,x_2,\dots,x_n;\delta)\in\mathcal{F}_{\rm iap}^\ast\}=\bigcap_{p\ge0}\bigcap_{m\ge1}\bigcup_{q\ge 0}\\&\{(x_1,x_2,\dots,x_n)\in Y^n\colon\min_{1\le j<k\le n}d(f^{p+qm}(x_j),f^{p+qm}(x_k))>\delta\}.\\
\bullet\:\:&\{(x_1,x_2,\dots,x_n)\in Y^n\colon S_f(x_1,x_2,\dots,x_n;\delta)\in\mathcal{F}_{\rm infinite}\}=\bigcap_{q\ge0}\bigcup_{i\ge q}\\&\{(x_1,x_2,\dots,x_n)\in Y^n\colon\min_{1\le j<k\le n}d(f^i(x_j),f^i(x_k))>\delta\}.
\end{align*}

The second claim follows from the following equations. 

\begin{align*}
\bullet\:\:&\bigcap_{\epsilon>0}\{(x_1,x_2,\dots,x_n)\in Y^n\colon T_f(x_1,x_2,\dots,x_n;\epsilon)\in\mathcal{F}_{\rm ud1}\}=\bigcap_{l\ge1}\bigcap_{p\ge1}\bigcap_{q\ge1}\bigcup_{m\ge q}\\
&\{(x_1,x_2,\dots,x_n)\in Y^n\colon\frac{1}{m}|\{0\le i\le m-1\colon\max_{1\le j<k\le n}d(f^i(x_j),f^i(x_k))<\frac{1}{l}\}|>1-\frac{1}{p}\}.\\
\bullet\:\:&\bigcap_{\epsilon>0}\{(x_1,x_2,\dots,x_n)\in Y^n\colon T_f(x_1,x_2,\dots,x_n;\epsilon)\in\mathcal{F}_{\rm iap}^\ast\}=\bigcap_{l\ge1}\bigcap_{p\ge0}\bigcap_{m\ge1}\bigcup_{q\ge 0}\\&\{(x_1,x_2,\dots,x_n)\in Y^n\colon\max_{1\le j<k\le n}d(f^{p+qm}(x_j),f^{p+qm}(x_k))<\frac{1}{l}\}.\\
\bullet\:\:&\bigcap_{\epsilon>0}\{(x_1,x_2,\dots,x_n)\in Y^n\colon T_f(x_1,x_2,\dots,x_n;\epsilon)\in\mathcal{F}_{\rm infinite}\}=\bigcap_{l\ge1}\bigcap_{q\ge0}\bigcup_{i\ge q}\\&\{(x_1,x_2,\dots,x_n)\in Y^n\colon\max_{1\le j<k\le n}d(f^i(x_j),f^i(x_k))<\frac{1}{l}\}.
\end{align*}

This completes the proof of the lemma.
\end{proof}

We recall a simplified version of a theorem of Mycielski \cite{M}. A topological space $Z$ is said to be {\em perfect} if $Z$ has no isolated point. A subset $S$ of a complete metric space $Z$ is said to be a {\em Mycielski set} if $S$ is a union of countably many Cantor sets (see \cite{BGKM}). Note that for any Mycielski set $S$ in $Z$ and an open subset $U$ of $Z$ with $S\cap U\ne\emptyset$, $S\cap U$ is an uncountable set.  

\begin{lem}[Mycielski]
Let $Z$ be a perfect complete metric space. Given any $n\ge2$ and a residual subset $R_n$ of $Z^n$, there is a Mycielski set $S$ which is dense in $Z$ and satisfies $(x_1,x_2,\dots,x_n)\in R_n$ for all distinct $x_1,x_2,\dots,x_n\in S$. Moreover, if $R_n$ is a residual subset of $Z^n$ for each $n\ge2$, then there is a Mycielski set $S$ which is dense in $Z$ and satisfies $(x_1,x_2,\dots,x_n)\in R_n$ for all $n\ge2$ and distinct $x_1,x_2,\dots,x_n\in S$.
\end{lem}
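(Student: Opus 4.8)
The plan is to run the classical Cantor-scheme (fusion) construction, enforcing the genericity requirements as containments of product boxes in open sets, a kind of condition that is preserved under shrinking and can therefore be met one at a time. First I would reduce each residual set to a countable intersection of dense open sets: for every $n\ge2$ choose dense open sets $G_{n,1}\supseteq G_{n,2}\supseteq\cdots$ in $Z^n$ with $\bigcap_{j\ge1}G_{n,j}\subseteq R_n$ (possible since $Z^n$ is a complete, hence Baire, metric space). Because a Cantor set is compact, a single one cannot be dense unless $Z$ is compact; so I would build countably many, indexing the whole construction by a countable forest whose $m$-th tree is an ordinary binary tree $2^{<\omega}$ with root open set chosen inside the $m$-th member $B_m$ of a fixed countable base of $Z$. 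To each node I attach a nonempty open set $V_{(m,s)}$, $s\in2^{<\omega}$, and require, as usual, that $\overline{V_{(m,s0)}}\sqcup\overline{V_{(m,s1)}}\subseteq V_{(m,s)}$ with disjoint closures and that $\mathrm{diam}\,V_{(m,s)}\to0$ as $|s|\to\infty$. Perfectness of $Z$ provides the splitting at each node, and completeness guarantees that each infinite branch determines a single point; the branch map of the $m$-th tree is then a continuous injection of $2^{\omega}$ whose image $K_m$ is a Cantor set contained in $B_m$. Consequently $S=\bigcup_{m\ge1}K_m$ is a Mycielski set meeting every $B_m$, hence dense in $Z$.

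The heart of the argument is the scheduling that forces the tuple condition. I would fix an enumeration of all ``tasks'', each task being a triple $(n,j,\tau)$, where $n\ge2$, $j\ge1$, and $\tau=\{(m_1,s_1),\dots,(m_n,s_n)\}$ is a set of $n$ pairwise distinct nodes of equal string-length; there are only countably many tasks, and below any finite level only finitely many are active. The construction proceeds level by level: after choosing the finitely many node-sets up to the current level as split refinements of the previous ones, I process the finitely many active tasks in turn. To handle $(n,j,\tau)$ I use that $G_{n,j}$ is dense open in $Z^n$, so that $G_{n,j}\cap\bigl(V_{(m_1,s_1)}\times\cdots\times V_{(m_n,s_n)}\bigr)$ is a nonempty open subset of the product box and therefore contains a smaller product box; I then replace each $V_{(m_i,s_i)}$ by the corresponding (nonempty, open) factor. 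Shrinking to nonempty open subsets preserves the nesting, the disjointness, and the diameter bounds, and, crucially, preserves every containment ``box $\subseteq$ open set'' already achieved for an earlier task, since a sub-box of a box lying in an open set still lies in that open set. Thus all active tasks at a level can be satisfied simultaneously.

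It then remains to read off the conclusion. Let $x_1,\dots,x_n\in S$ be distinct; each $x_i$ lies on a unique branch of some tree, and distinct points correspond to distinct branches, so there is a level $k$ so large that the truncations $(m_i,s_i)$ with $|s_i|=k$ are pairwise distinct nodes and the task $(n,j,\{(m_i,s_i)\})$ has already been processed. Then $x_i\in V_{(m_i,s_i)}$ for each $i$, whence $(x_1,\dots,x_n)\in G_{n,j}$; letting $j\to\infty$ gives $(x_1,\dots,x_n)\in\bigcap_{j\ge1}G_{n,j}\subseteq R_n$. The fixed-$n$ statement is the special case in which the task list contains only triples with that single $n$, and the ``moreover'' part is obtained in exactly the same way by keeping all $n\ge2$ in the task list at once.

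The step I expect to be the genuine obstacle is the bookkeeping in the second paragraph: one must simultaneously maintain the Cantor structure (splitting, disjoint closures, vanishing diameters) across all component trees, secure density through the base $\{B_m\}$, and satisfy every genericity task, including the cross-tree tasks that mix points from different $K_m$, without ever destroying a previously met requirement. The enabling observation that makes this tractable is that each requirement is a condition closed under shrinking, so a single pass refining finitely many open sets per level suffices; carefully verifying that only finitely many tasks are active below each level, and that all three families of constraints remain compatible under the same refinements, is where the real work lies.
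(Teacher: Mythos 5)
The paper does not prove this lemma at all: it is quoted, with a reference to Mycielski's original article \cite{M}, as a known ``simplified version'' of that theorem. So your argument cannot be compared line-by-line with the paper's proof; what you have written is the standard self-contained Cantor-scheme (fusion) proof, and in substance it is correct. The decomposition $R_n\supseteq\bigcap_{j\ge1}G_{n,j}$ into decreasing dense open sets, the countable forest of Cantor schemes rooted in members of a countable base, and the key observation that each requirement (``this product box lies inside this open set'') is preserved when the factors are shrunk are exactly the ingredients of the classical argument, and your deduction of the tuple condition from processed tasks is the right one. The fixed-$n$ statement and the ``moreover'' statement are indeed handled uniformly by the choice of task list.

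Three points need repair before the write-up is airtight. First, if all trees are rooted at level $0$, then every level already contains infinitely many nodes (two per tree), so the claim ``below any finite level only finitely many tasks are active'' fails as literally set up; you must introduce the trees gradually, e.g.\ tree $m$ enters the construction at stage $m$, and ``equal string-length'' should be replaced by ``equal construction stage''. Second, even with finitely many nodes per stage, infinitely many tasks $(n,j,\tau)$ are attached to a given $\tau$ because $j$ is unbounded; the standard cure --- which your decreasing arrangement $G_{n,1}\supseteq G_{n,2}\supseteq\cdots$ is clearly anticipating --- is to declare $(n,j,\tau)$ active at stage $k$ only when $j\le k$, and then, in your last paragraph, given distinct branches and a fixed $j$, to choose $k\ge j$ so large that the truncations are pairwise distinct: the processed task gives membership in $G_{n,k}\subseteq G_{n,j}$. (Note that a task whose nodes live at stage $k$ can only be processed at stage $k$, before those nodes acquire children, so ``process it later'' is not an available option; the $j\le k$ rule is what makes deferral unnecessary.) Third, your appeal to a countable base silently uses separability, which is not among the paper's hypotheses but is genuinely needed: a Mycielski set is a countable union of compact metric sets, hence separable, so a non-separable perfect complete metric space admits no dense Mycielski set and the statement as printed would be false there. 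This is harmless for the paper, where the lemma is applied to $G_\delta$-subsets of a compact metric space, which are Polish by Lemma 3.3, but it is worth stating that your proof (and the lemma) require $Z$ separable.
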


We use the following fact (see, e.g.\:Theorem 24.12 of \cite{W}).

\begin{lem}[Alexandroff]
A $G_\delta$-subset of a complete metric space is completely metrizable. 
\end{lem}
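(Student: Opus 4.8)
The plan is to follow the classical two-step route. First I would show that a single open subset of a complete metric space is completely metrizable, and then bootstrap to a countable intersection of open sets by embedding the $G_\delta$ set diagonally into a product of the completely metrizable open pieces. This is precisely Alexandroff's theorem, and the only genuine idea is confined to the first step.

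\emph{Step 1 (open sets).} Let $(Z,d)$ be complete and let $U\subset Z$ be open with $U\neq Z$. Define $\phi\colon U\to\mathbb{R}$ by $\phi(x)=1/d(x,Z\setminus U)$, which is finite and continuous on $U$ since $d(x,Z\setminus U)>0$ there. Set
\[
\rho(x,y)=d(x,y)+|\phi(x)-\phi(y)|.
\]
First I would check that $\rho$ is a metric inducing the subspace topology on $U$: since $\rho\geq d$, the $\rho$-topology is at least as fine as the $d$-topology, and continuity of $\phi$ gives the reverse inclusion. The decisive point is completeness: a $\rho$-Cauchy sequence $(x_k)$ is $d$-Cauchy, hence $d$-converges to some $x\in Z$; moreover $(\phi(x_k))$ is Cauchy in $\mathbb{R}$, hence bounded, so $d(x_k,Z\setminus U)$ stays bounded away from $0$ and therefore $d(x,Z\setminus U)>0$, i.e. $x\in U$. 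Convergence in $\rho$ then follows from the topology statement. (When $U=Z$ one simply takes $\rho=d$.)

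\emph{Step 2 (countable intersections).} Write $G=\bigcap_{n\geq1}U_n$ with each $U_n$ open, and let $\rho_n$ be the complete metric on $U_n$ produced in Step 1, noting $\rho_n\geq d$. On $G$ define
\[
D(x,y)=d(x,y)+\sum_{n\geq1}2^{-n}\min\{\rho_n(x,y),1\}.
\]
I would verify that $D$ induces the subspace topology on $G$, using that each $\rho_n$ induces the $d$-topology on $U_n$ and hence on $G$, while the factors $2^{-n}$ control the tail of the sum. For completeness, given a $D$-Cauchy sequence $(x_k)$ in $G$, it is $d$-Cauchy, so $x_k\to x$ in $Z$; for each fixed $n$ the estimate $\min\{\rho_n(x_k,x_l),1\}\leq 2^n D(x_k,x_l)$ shows $(x_k)$ is $\rho_n$-Cauchy, so it $\rho_n$-converges in $U_n$, and since $\rho_n$-convergence forces $d$-convergence the limit must coincide with $x$, whence $x\in U_n$. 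As this holds for every $n$, we get $x\in G$, and $(x_k)$ then $D$-converges to $x$.

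The main obstacle is the completeness assertion in Step 1: the entire content of the theorem lies in arranging that Cauchy sequences cannot escape to the boundary $Z\setminus U$, which is exactly what the blow-up of $\phi$ there prevents. Once Step 1 is in place, Step 2 is a routine assembly, the only care being the interplay of the three notions of convergence ($d$, $\rho_n$, and $D$) needed to identify the common limit point as lying in $G$.
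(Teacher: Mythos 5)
Your proof is correct. There is, however, nothing in the paper to compare it against: the paper does not prove this lemma at all, but simply quotes it as a known fact with a reference to Theorem 24.12 of \cite{W}. Your argument is the classical one, and it is sound in both steps: the penalty function $\phi(x)=1/d(x,Z\setminus U)$ makes the metric $\rho=d+|\phi(x)-\phi(y)|$ blow up near the boundary, so $\rho$-Cauchy sequences cannot leave $U$, and the weighted sum $D=d+\sum_n 2^{-n}\min\{\rho_n,1\}$ then handles the countable intersection; the key identifications (a $D$-Cauchy sequence is $\rho_n$-Cauchy for every $n$, and all the limits coincide because each $\rho_n$ dominates $d$) are exactly right. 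For what it is worth, Willard's own proof of Theorem 24.12 runs through the embedding $x\mapsto\bigl(x,\phi_1(x),\phi_2(x),\dots\bigr)$ of $G$ as a closed subspace of the complete space $Z\times\mathbb{R}^{\mathbb{N}}$, which is the same idea in different clothing: your metric $D$ is precisely a metric realizing that diagonal embedding, written out without mentioning products. The embedding formulation buys brevity (closed subspaces of complete spaces are complete, and one only checks closedness); your formulation buys self-containedness, at the cost of the routine but slightly fussy verification that $D$ induces the subspace topology.
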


Let $Y$ be a $G_\delta$-subset of $X$. For any Furstenberg families $\mathcal{F},\mathcal{G}\subset2^{\mathbb{N}_0}$ and $n\ge2$, if
\[
\{(x_1,x_2,\dots,x_n)\in Y^n\colon S_f(x_1,x_2,\dots,x_n;\delta)\in\mathcal{F}\}
\]
is a dense $G_{\delta}$-subset of $Y^n$ for some $\delta>0$, and
\[
\bigcap_{\epsilon>0}\{(x_1,x_2,\dots,x_n)\in Y^n\colon T_f(x_1,x_2,\dots,x_n;\epsilon)\in\mathcal{G}\}
\]
is a dense $G_\delta$-subset of $Y^n$, then by Lemmas 3.2 and 3.3, there is a Mycielski set $S$ which is dense in $Y$ and satisfies
\begin{align*}
(y_1,y_2,\dots,y_n)\in&\{(x_1,x_2,\dots,x_n)\in Y^n\colon S_f(x_1,x_2,\dots,x_n;\delta)\in\mathcal{F}\}\\&\cap\bigcap_{\epsilon>0}\{(x_1,x_2,\dots,x_n)\in Y^n\colon T_f(x_1,x_2,\dots,x_n;\epsilon)\in\mathcal{G}\}
\end{align*}
for any distinct $y_1,y_2,\dots,y_n\in S$. When $\mathcal{F}=\mathcal{G}=\mathcal{F}_{\rm ud1}$ (resp.\:$\mathcal{F}=\mathcal{G}=\mathcal{F}_{\rm infinite}$), this implies that $f$ exhibits {\em distributional $n$-chaos of type 1} \cite{LO,TF} (resp.\:{\em Li--Yorke $n$-chaos} \cite{X}).

\section{S-limit shadowing}

In this section, we define the s-limit shadowing and prove a lemma for the proofs of the main theorems.

\begin{defi}
\normalfont
Let $f\colon X\to X$ be a continuous map and let $\xi=(x_i)_{i\ge0}$ be a sequence of points in $X$. For $\delta>0$, $\xi$ is called a {\em $\delta$-limit-pseudo orbit} of $f$ if $d(f(x_i),x_{i+1})\le\delta$ for all $i\ge0$, and
\[
\lim_{i\to\infty}d(f(x_i),x_{i+1})=0.
\]
For $\epsilon>0$, $\xi$ is said to be {\em $\epsilon$-limit shadowed} by $x\in X$ if $d(f^i(x),x_i)\leq \epsilon$ for all $i\ge 0$, and
\[
\lim_{i\to\infty}d(f^i(x),x_i)=0.
\]
We say that $f$ has the {\em s-limit shadowing property} if for any $\epsilon>0$, there is $\delta>0$ such that every $\delta$-limit-pseudo orbit of $f$ is $\epsilon$-limit shadowed by some point of $X$.
\end{defi}

\begin{rem}
\normalfont
 Let $f\colon X\to X$ be a continuous map and let $\xi=(x_i)_{i\ge0}$ be a sequence of points in $X$.
\begin{itemize}
\item For $\delta>0$, $\xi$ is called a {\em $\delta$-pseudo orbit} of $f$ if $d(f(x_i),x_{i+1})\le\delta$ for all $i\ge0$. For $\epsilon>0$, $\xi$ is said to be {\em $\epsilon$-shadowed} by $x\in X$ if $d(f^i(x),x_i)\leq \epsilon$ for all $i\ge 0$. We say that $f$ has the {\em shadowing property} if for any $\epsilon>0$, there is $\delta>0$ such that every $\delta$-pseudo orbit of $f$ is $\epsilon$-shadowed by some point of $X$.
\item $\xi$ is called a {\em limit-pseudo orbit} of $f$ if
\[
\lim_{i\to\infty}d(f(x_i),x_{i+1})=0,
\]
and said to be {\em limit shadowed} by $x\in X$
if
\[
\lim_{i\to\infty}d(f^i(x),x_i)=0.
\]
We say that $f$ has the {\em limit shadowing property} if every limit-pseudo orbit of $f$ is limit shadowed by some point of $X$.
\end{itemize}
It is not difficult to show that if $f$ has the s-limit shadowing property, then $f$ satisfies the shadowing property. It is also known that if $f$ has the s-limit shadowing property, then $f$ satisfies the limit shadowing property (see \cite{BGO}).  
\end{rem}

\begin{rem}
\normalfont
In \cite{MO}, it is proved that the s-limit shadowing is dense in the space of all continuous self-maps of a compact topological manifold possibly with boundary. In a recent paper \cite{BCOT}, it is shown that the s-limit shadowing is generic in the space of all continuous circle maps.
\end{rem}

\begin{lem}
Let $f\colon X\to X$ be a continuous map. Let $C\in\mathcal{C}(f)$ and $D\in\mathcal{D}(C)$. If $f$ has the s-limit shadowing property, then for any $x,y\in V^s(D)$ and $\epsilon>0$, there is $z\in V^s(D)$ such that $d(y,z)\le\epsilon$ and
\[
\lim_{i\to\infty}d(f^i(x),f^i(z))=0.
\]
\end{lem}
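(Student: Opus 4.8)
The plan is to build a $\delta$-limit-pseudo orbit $\xi=(\xi_i)_{i\ge0}$ that starts at $y$, transits through the chain component $C$ using the cyclic structure, and then coincides with the forward orbit of $x$; feeding $\xi$ into the s-limit shadowing property will then produce the desired $z$. Fix $\epsilon>0$ and let $\delta>0$ be the constant given by s-limit shadowing for $\epsilon$. Let $D_\delta\in\mathcal{D}(C,\delta)$ be the unique element with $D\subset D_\delta$, let $m=m(C,\delta)$ be the associated period, and let $N$ be the integer furnished by property (P4). Since $x,y\in V^s(D)$, we have $\lim_{i\to\infty}d(f^i(x),f^i(D_\delta))=0$ and $\lim_{i\to\infty}d(f^i(y),f^i(D_\delta))=0$, and $f^i(D_\delta)$ runs cyclically through the elements of $\mathcal{D}(C,\delta)$ with period $m$.

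First I would fix a large time $T_1$ and choose $a\in f^{T_1}(D_\delta)$ with $d(f^{T_1}(y),a)\le\delta$, which is possible because $f^{T_1}(D_\delta)$ is compact and the above limit for $y$ is small at large times. Next, using the limit for $x$ together with the uniform continuity of $f$, I would pick $n\ge N$ so large that, setting $T_2=T_1+mn$, there is $b\in f^{T_2}(D_\delta)$ with $d(f^{T_2}(x),b)$ small enough that $d(f(b),f^{T_2+1}(x))\le\delta$. Since $T_2\equiv T_1\pmod m$, the points $a$ and $b$ lie in the same element of $\mathcal{D}(C,\delta)$, hence $a\sim_{C,\delta}b$; property (P4) then yields a $\delta$-chain $(w_j)_{j=0}^{mn}$ of $f|_C$ with $w_0=a$ and $w_{mn}=b$, which is also a $\delta$-chain of $f$ since it lies in $C$.

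I would then set $\xi_i=f^i(y)$ for $0\le i\le T_1-1$, set $\xi_{T_1+j}=w_j$ for $0\le j\le mn$, and set $\xi_i=f^i(x)$ for $i\ge T_2+1$. The consecutive jumps vanish on the initial and final orbit segments, are at most $\delta$ along the chain, and are at most $\delta$ at the two junctions $i=T_1-1$ and $i=T_2$ by the choices of $a$ and $b$; since every jump past time $T_2$ is zero, $\xi$ is a $\delta$-limit-pseudo orbit. Applying s-limit shadowing produces $z\in X$ with $d(f^i(z),\xi_i)\le\epsilon$ for all $i\ge0$ and $\lim_{i\to\infty}d(f^i(z),\xi_i)=0$. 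Evaluating at $i=0$ gives $d(z,y)\le\epsilon$, and reading the limit along the tail, where $\xi_i=f^i(x)$, gives $\lim_{i\to\infty}d(f^i(z),f^i(x))=0$. Finally, the triangle inequality upgrades this to $z\in V^s(D)$: for each $\delta'>0$ one has $d(f^i(z),f^i(D_{\delta'}))\le d(f^i(z),f^i(x))+d(f^i(x),f^i(D_{\delta'}))\to0$, and likewise $d(f^i(z),C)\le d(f^i(z),f^i(x))+d(f^i(x),C)\to0$, so $z\in W^s(C)$ and $z\in V^s(D)$.

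The step I expect to be the main obstacle is the period bookkeeping in the transition. The connecting chain inside $C$ must have length exactly $T_2-T_1$ to fill the time gap, which forces $T_2-T_1$ to be a multiple of $m$; this is precisely what makes $a$ and $b$ lie in a common $\sim_{C,\delta}$-class so that (P4) applies and delivers a chain of the prescribed admissible length $mn$. Simultaneously arranging $T_1$, $n\ge N$, and $T_2=T_1+mn$ so that both junction jumps remain $\le\delta$ is the delicate point, and it is here that the hypothesis $x,y\in V^s(D)$ with the \emph{same} $D$ is essential: otherwise the orbits of $x$ and $y$ would approach distinct elements of $\mathcal{D}(C,\delta)$ and no single $\delta$-chain within one $\sim_{C,\delta}$-class could connect $a$ to $b$.
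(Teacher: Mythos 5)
Your proof is correct and follows essentially the same route as the paper's: an initial orbit segment of $y$, a connecting $\delta$-chain of length $mn$ inside the common $\sim_{C,\delta}$-class supplied by property (P4), the orbit tail of $x$, and an application of s-limit shadowing, with $z\in V^s(D)$ deduced from $x\in V^s(D)$ via the triangle inequality. The only cosmetic difference is at the $x$-junction: you use uniform continuity to keep every jump $\le\delta$, whereas the paper simply tolerates jumps of size $2\delta$ there and invokes shadowing of $2\delta$-limit-pseudo orbits.
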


\begin{proof}
Since $f$ has the s-limit shadowing property, there is $\delta>0$ such that every $2\delta$-limit-pseudo orbit of $f$ is $\epsilon$-limit shadowed by some point of $X$. For this $\delta>0$, we take $E\in\mathcal{D}(C,\delta)$ with $D\subset E$. Let $m=|\mathcal{D}(C,\delta)|$ and note that $f^m(E)=E$. By property (P4) of $\sim_{C,\delta}$, we can choose $N>0$ such that for any $p,q\in E$ and $n\ge N$, there is a $\delta$-chain $(x_i)_{i=0}^{mn}$ of $f$ with $x_0=p$ and $x_{mn}=q$. Since $x,y\in V^s(D)$, we have
\[
\lim_{i\to\infty}d(f^i(x),f^i(E))=\lim_{i\to\infty}d(f^i(y),f^i(E))=0;
\]
therefore, $d(f^{2mn}(x),E)\le\delta$ and $d(f^{mn}(y),E)\le2\delta$ for some $n\ge N$. Take $x',y'\in E$ such that 
$d(f^{2mn}(x),x')\le\delta$ and $d(f^{mn}(y),y')\le2\delta$. Since $n\ge N$, the choice of $N$ gives a $\delta$-chain $(x_i)_{i=0}^{mn}$ of $f$ with $x_0=y'$ and $x_{mn}=x'$. Consider the $2\delta$-limit-pseudo orbit
\[
\xi=(z_i)_{i\ge0}=(y,f(y),\dots,f^{mn-1}(y),x_0,x_1,\dots,x_{mn-1},f^{2mn}(x),f^{2mn+1}(x),\dots),
\]
which is $\epsilon$-limit shadowed by $z\in X$. Note that $d(y,z)=d(z,z_0)\le\epsilon$. By
\[
\lim_{i\to\infty}d(f^i(x),f^i(z))=\lim_{i\to\infty}d(f^i(z),z_i)=0
\]
and $x\in V^s(D)$, we obtain $z\in V^s(D)$, completing the proof of the lemma.
\end{proof}

\begin{cor}
Let $f\colon X\to X$ be a continuous map. Let $C\in\mathcal{C}(f)$ and $D\in\mathcal{D}(C)$. If $f$ has the s-limit shadowing property, then
\[
A_n=\bigcap_{\epsilon>0}\{(x_1,x_2,\dots,x_n)\in[V^s(D)]^n\colon T_f(x_1,x_2,\dots,x_n;\epsilon)\in\mathcal{F}_{\rm ud1}\}
\]
is a dense $G_\delta$-subset of $[V^s(D)]^n$ for each $n\ge2$.
\end{cor}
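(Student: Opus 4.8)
The plan is to verify separately that $A_n$ is a $G_\delta$-subset of $[V^s(D)]^n$ and that it is dense. The $G_\delta$ assertion requires no new work: since $V^s(D)$ is a non-empty subset of $X$ (it contains $D$), it is exactly the second claim of Lemma 3.1 applied with $Y=V^s(D)$ and $\mathcal{F}=\mathcal{F}_{\rm ud1}$. Thus the whole content of the corollary lies in the density of $A_n$, and here the s-limit shadowing hypothesis enters only through Lemma 4.1.

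The observation I would isolate first is that every $n$-tuple with pairwise asymptotic coordinates belongs to $A_n$. Indeed, suppose $(z_1,z_2,\dots,z_n)\in[V^s(D)]^n$ satisfies $\lim_{i\to\infty}d(f^i(z_j),f^i(z_k))=0$ for all $1\le j<k\le n$. Then for each fixed $\epsilon>0$ the complement $\mathbb{N}_0\setminus T_f(z_1,z_2,\dots,z_n;\epsilon)$ is finite, so $\overline{d}(T_f(z_1,z_2,\dots,z_n;\epsilon))=1$, i.e.\ $T_f(z_1,z_2,\dots,z_n;\epsilon)\in\mathcal{F}_{\rm ud1}$. Since $\epsilon>0$ is arbitrary, $(z_1,z_2,\dots,z_n)\in A_n$.

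To prove density, I would fix an arbitrary $(x_1,x_2,\dots,x_n)\in[V^s(D)]^n$ and $\epsilon>0$, and construct a pairwise asymptotic tuple that is $\epsilon$-close to it in every coordinate. Set $z_1=x_1$. For each $j=2,\dots,n$, apply Lemma 4.1 with $x:=x_1$ and $y:=x_j$ to obtain $z_j\in V^s(D)$ with $d(x_j,z_j)\le\epsilon$ and $\lim_{i\to\infty}d(f^i(x_1),f^i(z_j))=0$; thus each $z_j$ is asymptotic to $z_1=x_1$. By the triangle inequality,
\[
\limsup_{i\to\infty}d(f^i(z_j),f^i(z_k))\le\lim_{i\to\infty}d(f^i(z_j),f^i(x_1))+\lim_{i\to\infty}d(f^i(x_1),f^i(z_k))=0
\]
for all $1\le j<k\le n$, so $(z_1,z_2,\dots,z_n)$ is pairwise asymptotic and hence lies in $A_n$ by the previous paragraph. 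As $d(x_j,z_j)\le\epsilon$ for every $1\le j\le n$ (with $d(x_1,z_1)=0$), the tuple $(z_1,z_2,\dots,z_n)$ is arbitrarily close to $(x_1,x_2,\dots,x_n)$, which gives density.

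There is no real obstacle once Lemma 4.1 is available: the only points needing care are that asymptoticity to a common orbit propagates to pairwise asymptoticity through the triangle inequality, and that a pairwise asymptotic tuple makes each time-set $T_f(\,\cdot\,;\epsilon)$ cofinite and so of upper density one. Combining density with the $G_\delta$ property from Lemma 3.1 yields the corollary.
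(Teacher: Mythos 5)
Your proof is correct and follows essentially the same route as the paper: both obtain density from Lemma 4.1 by producing, arbitrarily close to any tuple, a tuple whose coordinates are all asymptotic to a common orbit (the paper anchors to a fixed $p\in V^s(D)$, you anchor to the first coordinate $x_1$), and both get the $G_\delta$ property from Lemma 3.1. The only difference is that you spell out explicitly the observation, left implicit in the paper, that pairwise asymptotic tuples have cofinite sets $T_f(\cdot;\epsilon)$ and hence lie in $A_n$.
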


\begin{proof}
Fix $p\in V^s(D)$ and $n\ge2$. Then, by Lemma 4.1,
\[
B_n=\{(x_1,x_2,\dots,x_n)\in[V^s(D)]^n\colon\lim_{i\to\infty}d(f^i(p),f^i(x_j))=0\:\:\text{for all $1\le j\le n$}\}
\]
is dense in $[V^s(D)]^n$. Since $B_n\subset A_n$, we conclude that $A_n$ is a dense $G_\delta$-subset of $[V^s(D)]^n$, proving the corollary.
\end{proof}

\section{Proofs of theorems}

In this section, we prove the main theorems. First, we prove Theorem 1.1. Let $f\colon X\to X$ be a continuous map. For any $n\ge2$ and $\delta_n>0$, we say that $(y_1,y_2,\dots,y_n)\in X^n$ is a $\delta_n$-distal $n$-tuple for $f$ if
\[
\inf_{i\ge0}\min_{1\le j<k\le n}d(f^i(y_j),f^i(y_k))>\delta_n. 
\]

\begin{proof}[Proof of Theorem 1.1]
First, the implication $(1)\implies(\text{1'})$ is proved in Lemma 2.5 of \cite{K3}. We shall prove the implication $(\text{1'})\implies(2)$. Let $D\in\mathcal{D}(C)$ and take a $\delta_n$-distal $n$-tuple $(y_1,y_2,\dots,y_n)\in D^n$ for $f$. Letting
\[
c_n=\inf_{i\ge0}\min_{1\le j<k\le n}d(f^i(y_j),f^i(y_k)),
\]
we have $0<\delta_n<c_n$. Given $a_1,a_2,\dots,a_n\in V^s(D)$ and $\epsilon>0$, by Lemma 4.1, there are $w_1,w_2,\dots,w_n\in V^s(D)$ such that $d(a_l,w_l)\le\epsilon$ and
\[
\lim_{i\to\infty}d(f^i(y_l),f^i(w_l))=0
\]
for all $1\le l\le n$. Then, we have
\[
S_f(w_1,w_2,\dots,w_n;\delta_n)\in\mathcal{F}_{\rm ud1}.
\]
Since $a_1,a_2,\dots,a_n\in V^s(D)$ and $\epsilon>0$ are arbitrary,
\[
\{(x_1,x_2,\dots,x_n)\in[V^s(D)]^n\colon S_f(x_1,x_2,\dots,x_n;\delta_n)\in\mathcal{F}_{\rm ud1}\}
\]
is a dense $G_\delta$-subset of $[V^s(D)]^n$. On the other hand, by Corollary 4.1, 
\[
\bigcap_{\epsilon>0}\{(x_1,x_2,\dots,x_n)\in[V^s(D)]^n\colon T_f(x_1,x_2,\dots,x_n;\epsilon)\in\mathcal{F}_{\rm ud1}\}
\]
is a dense $G_\delta$-subset of $[V^s(D)]^n$. Since $D\in\mathcal{D}(C)$ is arbitrary, the implication $(\text{1'})\implies(2)$ has been proved. Since
\[
\mathcal{F}_{\rm ud1}\subset\mathcal{F}_{\rm thick}\subset\mathcal{F}_{\rm infinite},
\]
the implication $(2)\implies(3)$ is obvious from the definitions. It remains to prove the implication $(3)\implies(1)$. Assume that
$(y_1,y_2,\dots,y_n)\in[W^s(C)]^n$ and $\delta_n>0$ satisfy
\[
S_f(y_1,y_2,\dots,y_n;\delta_n)\in\mathcal{F}_{\rm thick},
\]
and
\[
T_f(y_1,y_2,\dots,y_n;\epsilon)\in\mathcal{F}_{\rm infinite}
\]
for all $\epsilon>0$. By the former condition, we obtain two sequences $0\le i_1<i_2<\cdots$, $1\le j_1<j_2<\cdots$, and $x_1,x_2,\dots,x_n\in C$ such that
\[
\min_{1\le j<k\le n}d(f^i(y_j),f^i(y_k))>\delta_n
\]
for all $m\ge1$ and $i_m\le i\le i_m+j_m-1$, and
\[
\lim_{m\to\infty}f^{i_m}(y_l)=x_l
\]
for each $1\le l\le n$. Then, we have
\[
\inf_{i\ge0}\min_{1\le j<k\le n}d(f^i(x_j),f^i(x_k))\ge\delta_n>0.
\]
On the other hand, the latter condition implies
\[
\liminf_{i\to\infty}\max_{1\le j<k\le n}d(f^i(y_j),f^i(y_k))=0
\]
and so, as shown in the proof of Lemma 2.2,
\[
(y_1,y_2,\dots,y_n)\in[V^s(D)]^n
\]
for some $D\in\mathcal{D}(C)$. Given any $\delta>0$ and $E_\delta\in\mathcal{D}(C,\delta)$ with $D\subset E_\delta$, we obtain 
\[
\lim_{m\to\infty}d(f^{i_m}(y_l),f^{i_m}(E_\delta))=0
\]
for every $1\le l\le n$, and so 
\[
x_1,x_2,\dots,x_n\in E'_\delta
\]
for some $E'_\delta\in\mathcal{D}(C,\delta)$. Since $\delta>0$ is arbitrary, it follows that $x_1,x_2,\dots,x_n\in D'$ for some $D'\in\mathcal{D}(C)$, thus the implication $(3)\implies(1)$ has been proved. This completes the proof of Theorem 1.1.
\end{proof}

Next, we prove Theorem 1.2.

\begin{proof}[Proof of Theorem 1.2]
First, we prove the implication $(1)\implies(2)$. Fix any $0<\delta_n<\Delta_n$ and  $D\in\mathcal{D}(C)$. Let $p,m$ be integers with $p\ge0$ and $m\ge1$. Given $a_1,a_2,\dots,a_n\in V^s(D)$ and $\epsilon>0$ with $\delta_n+2\epsilon<\Delta_n$, since $f$ has the s-limit shadowing property, there is $\delta>0$ such that every $\delta$-limit-pseudo orbit of $f$ is $\epsilon$-limit shadowed by some point of $X$. Note that $f^p(D)\in\mathcal{D}(C)$ and take $E\in\mathcal{D}(C,\delta)$ with $f^p(D)\subset E$. Letting $\mu=|\mathcal{D}(C,\delta)|$, we have $f^\mu(E)=E$, and by property (P4) of $\sim_{C,\delta}$, we can choose $N>0$ such that for any $a,b\in E$ and $c\ge N$, there is a $\delta$-chain $(z_i)_{i=0}^{\mu c}$ of $f$ with $z_0=a$ and $z_{\mu c}=b$. Take $E'\in\mathcal{D}(C,\delta)$ with $D\subset E'$ and note that $f^p(E')=E$. Since $a_1,a_2,\dots,a_n\in V^s(D)$, by
\[
\lim_{i\to\infty}d(f^i(a_l),f^i(E'))=\lim_{i\to\infty}d(f^{p+i}(a_l),f^{p+i}(E'))=\lim_{i\to\infty}d(f^{p+i}(a_l),f^i(E))=0,
\]
we obtain $\lambda\ge N$ with $d(f^{p+\mu\lambda m}(a_l),E)\le\delta$ for all $1\le l\le n$. Fix any $1\le l\le n$ and take $a'_l\in E$ with $d(f^{p+\mu\lambda m}(a_l),a'_l)\le\delta$. Note that $f^{p+2\mu\lambda m}(D)\in\mathcal{D}(C)$ and $f^{p+2\mu\lambda m}(D)\subset E$. By the definition of $\Delta_n$, there are $y_1,y_2,\dots,y_n\in f^{p+2\mu\lambda m}(D)$ such that
\[
\min_{1\le j<k\le n}d(y_j,y_k)\ge\Delta_n.
\]
Then, the choice of $N$ gives a $\delta$-chain $(z_i^{(l)})_{i=0}^{\mu\lambda m}$ of $f$ with $z_0^{(l)}=a'_l$ and $z_{\mu\lambda m}^{(l)}=y_l$. Consider the $\delta$-limit-pseudo orbit
\[
\xi^{(l)}=(w_i^{(l)})_{i\ge0}=(a_l,f(a_l),\dots,f^{p+\mu\lambda m-1}(a_l),z_0^{(l)},z_1^{(l)},\dots,z_{\mu\lambda m-1}^{(l)},y_l,f(y_l),f^2(y_l),\dots),
\]
which is $\epsilon$-limit shadowed by $w_l\in X$. Note that
\[
d(a_l,w_l)=d(w_l,w_0^{(l)})\le\epsilon.
\]
By $y_l\in f^{p+2\mu\lambda m}(D)$ and
\[
\lim_{i\to\infty}d(f^{p+2\mu\lambda m+i}(w_l),f^i(y_l))=\lim_{i\to\infty}d(f^i(w_l),w_i^{(l)})=0,
\]
we obtain
\[
\lim_{i\to\infty}d(f^i(w_l),f^i(D))=0
\]
and so $w_l\in V^s(D)$. We also obtain
\[
d(f^{p+2\mu\lambda m}(w_l),y_l)=d(f^{p+2\mu\lambda m}(w_l),w_{p+2\mu\lambda m}^{(l)})\le\epsilon,
\]
thus
\[
\min_{1\le j<k\le n}d(f^{p+2\mu\lambda m}(w_j),f^{p+2\mu\lambda m}(w_k))\ge\Delta_n-2\epsilon>\delta_n.
\]
Since $a_1,a_2,\dots,a_n\in V^s(D)$ and $\epsilon>0$ with $\delta_n+2\epsilon<\Delta_n$ are arbitrary, it follows that
\[
\bigcup_{q\ge 0}\{(x_1,x_2,\dots,x_n)\in[V^s(D)]^n\colon\min_{1\le j<k\le n}d(f^{p+qm}(x_j),f^{p+qm}(x_k))>\delta_n\}
\]
is a dense open subset of $[V^s(D)]^n$. Since $p,m$ with $p\ge0$ and $m\ge1$ are arbitrary,
\begin{align*}
&\{(x_1,x_2,\dots,x_n)\in[V^s(D)]^n\colon S_f(x_1,x_2,\dots,x_n;\delta_n)\in\mathcal{F}_{\rm iap}^\ast\}=\bigcap_{p\ge0}\bigcap_{m\ge1}\bigcup_{q\ge 0}\\&\{(x_1,x_2,\dots,x_n)\in[V^s(D)]^n\colon\min_{1\le j<k\le n}d(f^{p+qm}(x_j),f^{p+qm}(x_k))>\delta_n\}
\end{align*}
is a dense $G_\delta$-subset of $[V^s(D)]^n$. On the other hand, by Corollary 4.1,
\[
\bigcap_{\epsilon>0}\{(x_1,x_2,\dots,x_n)\in[V^s(D)]^n\colon T_f(x_1,x_2,\dots,x_n;\epsilon)\in\mathcal{F}_{\rm ud1}\}
\]
is a dense $G_\delta$-subset of $[V^s(D)]^n$. Since $D\in\mathcal{D}(C)$ is arbitrary, the implication $(1)\implies(2)$ has been proved. Since $\mathcal{F}_{\rm ud1}\subset\mathcal{F}_{\rm infinite}$, the implication $(2)\implies(3)$ is obvious from the definitions. It remains to prove the implication $(3)\implies(1)$. Assume that
\[
\Delta_n=\inf_{D\in\mathcal{D}(C)}\sup_{x_1,x_2,\dots,x_n\in D}\min_{1\le j<k\le n}d(x_j,x_k)=0.
\]
Given any $(y_1,y_2,\dots,y_n)\in[W^s(C)]^n$, if
\[
T_f(y_1,y_2,\dots,y_n;\epsilon)\in\mathcal{F}_{\rm infinite}
\]
for all $\epsilon>0$, then
\[
\liminf_{i\to\infty}\max_{1\le j<k\le n}d(f^i(y_j),f^i(y_k))=0;
\]
therefore, as shown in the proof of Lemma 2.2,
\[
(y_1,y_2,\dots,y_n)\in[V^s(D)]^n
\]
for some $D\in\mathcal{D}(C)$. It suffices to show that
\[
S_f(y_1,y_2,\dots,y_n;\delta_n)\not\in\mathcal{F}_{\rm iap}^\ast
\]
for all $\delta_n>0$. For any $\delta_n>0$, fix $0<h<\delta_n/2$. Then, by the assumption, we have
\[
\sup_{x_1,x_2,\dots,x_n\in D_h}\min_{1\le j<k\le n}d(x_j,x_k)<h
\]
for some $D_h\in\mathcal{D}(C)$. If $\beta_h>0$ is sufficiently small, then $E_h\in\mathcal{D}(C,\beta_h)$ with $D_h\subset E_h$ satisfies
\[
\sup_{x_1,x_2,\dots,x_n\in E_h}\min_{1\le j<k\le n}d(x_j,x_k)<2h.
\]
Let $m_h=|\mathcal{D}(C,\beta_h)|$ and note that $f^{m_h}(E_h)=E_h$. By taking $p\ge0$ with $f^p(D)\subset E_h$ and  $E\in\mathcal{D}(C,\beta_h)$ with $D\subset E$, we obtain $f^p(E)=E_h$. Since $(y_1,y_2,\dots,y_n)\in[V^s(D)]^n$, it follows that
\[
\lim_{i\to\infty}d(f^i(y_l),f^i(E))=\lim_{q\to\infty}d(f^{p+m_hq}(y_l),f^{p+m_hq}(E))=\lim_{q\to\infty}d(f^{p+mq}(y_l),E_h)=0
\]
for all $1\le l\le n$; therefore, there is $Q\ge0$ such that
\[
\min_{1\le j<k\le n}d(f^{p+m_hq}(y_j),f^{p+m_hq}(y_k))<2h
\]
for every $q\ge Q$. Letting $p'=p+m_hQ$, we obtain
\[
\min_{1\le j<k\le n}d(f^{p'+m_hq}(y_j),f^{p'+m_hq}(y_k))<2h<\delta_n
\]
for every $q\ge0$, implying
\[
S_f(y_1,y_2,\dots,y_n;\delta_n)\not\in\mathcal{F}_{\rm iap}^\ast.
\]
Since $\delta_n>0$ is arbitrary, the implication $(3)\implies(1)$ has been proved. This completes the proof of Theorem 1.2.
\end{proof}

Finally, we prove Theorem 1.3.

\begin{proof}[Proof of Theorem 1.3]
First, we prove the implication $(1)\implies(2)$. Fix any $D_n\in\mathcal{D}(C)$ with $|D_n|\ge n$. We take distinct $y_1,y_2,\dots,y_n\in D_n$ and let
\[
\Gamma_n=\min_{1\le j<k\le n}d(y_j,y_k)>\delta_n>0.
\]
Let $D\in\mathcal{D}(C)$ and $q\ge0$. Given $a_1,a_2,\dots,a_n\in V^s(D)$ and $\epsilon>0$ with $\delta_n+2\epsilon<\Gamma_n$, since $f$ has the s-limit shadowing property, there is $\delta>0$ such that every $\delta$-limit-pseudo orbit of $f$ is $\epsilon$-limit shadowed by some point of $X$. Take $E\in\mathcal{D}(C,\delta)$ with $D_n\subset E$. Letting $m=|\mathcal{D}(C,\delta)|$, we have $f^m(E)=E$, and by property (P4) of $\sim_{C,\delta}$, we can choose $N>0$ such that for any $a,b\in E$ and $c\ge N$, there is a $\delta$-chain $(z_i)_{i=0}^{mc}$ of $f$ with $z_0=a$ and $z_{mc}=b$. By taking $p\ge0$ with $f^p(D)\subset E$ and $E'\in\mathcal{D}(C,\delta)$ with $D\subset E'$, we have $f^p(E')=E$. Since $a_1,a_2,\dots,a_n\in V^s(D)$, by
\[
\lim_{i\to\infty}d(f^i(a_l),f^i(E'))=\lim_{i\to\infty}d(f^{p+i}(a_l),f^{p+i}(E'))=\lim_{i\to\infty}d(f^{p+i}(a_l),f^i(E))=0,
\]
we obtain $c\ge N$ with $p+2mc\ge q$ and $d(f^{p+mc}(a_l),E)\le\delta$ for all $1\le l\le n$. Note that $f^{p+3mc}(D)\subset E$ and fix some $v\in f^{p+3mc}(D)$. Fix any $1\le l\le n$ and take $a'_l\in E$ with $d(f^{p+mc}(a_l),a'_l)\le\delta$. Then, the choice of $N$ gives a $\delta$-chain $(z_i^{(l)})_{i=0}^{2mc}$ of $f$ with $z_0^{(l)}=a'_l$, $z_{mc}^{(l)}=y_l$, and $z_{2mc}^{(l)}=v$. Consider the $\delta$-limit-pseudo orbit
\[
\xi^{(l)}=(w_i^{(l)})_{i\ge0}=(a_l,f(a_l),\dots,f^{p+mc-1}(a_l),z_0^{(l)},z_1^{(l)},\dots,z_{2mc-1}^{(l)},v,f(v),f^2(v),\dots),
\]
which is $\epsilon$-limit shadowed by $w_l\in X$. Note that
\[
d(a_l,w_l)=d(w_l,w_0^{(l)})\le\epsilon.
\]
By $v\in f^{p+3mc}(D)$ and
\[
\lim_{i\to\infty}d(f^{p+3mc+i}(w_l),f^i(v))=\lim_{i\to\infty}d(f^i(w_l),w_i^{(l)})=0,
\]
we obtain
\[
\lim_{i\to\infty}d(f^i(w_l),f^i(D))=0
\]
and so $w_l\in V^s(D)$. We also obtain
\[
d(f^{p+2mc}(w_l),y_l)=d(f^{p+2mc}(w_l),z_{mc}^{(l)})=d(f^{p+2mc}(w_l),w_{p+2mc}^{(l)})\le\epsilon,
\]
thus
\[
\min_{1\le j<k\le n}d(f^{p+2mc}(w_j),f^{p+2mc}(w_k))\ge\Gamma_n-2\epsilon>\delta_n.
\]
Recall that $p+2mc\ge q$. Since $a_1,a_2,\dots,a_n\in V^s(D)$ and $\epsilon>0$ with $\delta_n+2\epsilon<\Gamma_n$ are arbitrary, it follows that
\[
\bigcup_{i\ge q}\{(x_1,x_2,\dots,x_n)\in[V^s(D)]^n\colon\min_{1\le j<k\le n}d(f^i(x_j),f^i(x_k))>\delta_n\}
\]
is a dense open subset of $[V^s(D)]^n$. Since $q\ge0$ is arbitrary,
 \begin{align*}
&\{(x_1,x_2,\dots,x_n)\in[V^s(D)]^n\colon S_f(x_1,x_2,\dots,x_n;\delta_n)\in\mathcal{F}_{\rm infinite}\}=\bigcap_{q\ge0}\bigcup_{i\ge q}\\&\{(x_1,x_2,\dots,x_n)\in[V^s(D)]^n\colon\min_{1\le j<k\le n}d(f^i(x_j),f^i(x_k))>\delta_n\}
\end{align*}
is a dense $G_\delta$-subset of $[V^s(D)]^n$. On the other hand, by Corollary 4.1,
\[
\bigcap_{\epsilon>0}\{(x_1,x_2,\dots,x_n)\in[V^s(D)]^n\colon T_f(x_1,x_2,\dots,x_n;\epsilon)\in\mathcal{F}_{\rm ud1}\}
\]
is a dense $G_\delta$-subset of $[V^s(D)]^n$. Since $D\in\mathcal{D}(C)$ is arbitrary, the implication $(1)\implies(2)$ has been proved. Since $\mathcal{F}_{\rm ud1}\subset\mathcal{F}_{\rm infinite}$, the implication $(2)\implies(3)$ is obvious from the definitions. It remains to prove the implication $(3)\implies(1)$. Assume that
$(y_1,y_2,\dots,y_n)\in[W^s(C)]^n$ and $\delta_n>0$ satisfy
\[
S_f(y_1,y_2,\dots,y_n;\delta_n)\in\mathcal{F}_{\rm infinite},
\]
and
\[
T_f(y_1,y_2,\dots,y_n;\epsilon)\in\mathcal{F}_{\rm infinite}
\]
for all $\epsilon>0$. By the former condition, we obtain a sequence $0\le i_1<i_2<\cdots$ and $x_1,x_2,\dots,x_n\in C$ such that
\[
\min_{1\le j<k\le n}d(f^{i_m}(y_j),f^{i_m}(y_k))>\delta_n
\]
for all $m\ge1$, and
\[
\lim_{m\to\infty}f^{i_m}(y_l)=x_l
\]
for each $1\le l\le n$. Then, we have
\[
\min_{1\le j<k\le n}d(x_j,x_k)\ge\delta_n>0;
\]
therefore, $x_1,x_2,\dots,x_n$ are distinct. On the other hand, the latter condition implies
\[
\liminf_{i\to\infty}\max_{1\le j<k\le n}d(f^i(y_j),f^i(y_k))=0
\]
and so, as shown in the proof of Lemma 2.2,
\[
(y_1,y_2,\dots,y_n)\in[V^s(D)]^n
\]
for some $D\in\mathcal{D}(C)$. Given any $\delta>0$ and $E_\delta\in\mathcal{D}(C,\delta)$ with $D\subset E_\delta$, we obtain 
\[
\lim_{m\to\infty}d(f^{i_m}(y_l),f^{i_m}(E_\delta))=0
\]
for every $1\le l\le n$, and so 
\[
x_1,x_2,\dots,x_n\in E'_\delta
\]
for some $E'_\delta\in\mathcal{D}(C,\delta)$. Since $\delta>0$ is arbitrary, it follows that $x_1,x_2,\dots,x_n\in D'$ for some $D'\in\mathcal{D}(C)$, implying
\[
|D'|\ge n,
\]
thus the implication $(3)\implies(1)$ has been proved. This completes the proof of Theorem 1.3.
\end{proof}

\section{Concluding remarks}

In this section, we make some concluding remarks. Given any continuous map $f\colon X\to X$, we regard $\mathcal{C}(f)$ as the quotient space
\[
\mathcal{C}(f)=CR(f)/{\leftrightarrow}.
\]
For any $C\in\mathcal{C}(f)$, if $x\sim_C y$ implies $x=y$ for all $x,y\in C$, then $C$ is a periodic orbit or an odometer (see, e.g.\:Section 6 of \cite{K2}). Let
\[
\mathcal{C}_{\rm o}(f)=\{C\in\mathcal{C}(f)\colon\text{$x\sim_C y$ implies $x=y$ for all $x,y\in C$}\}
\]
and let
\[
\mathcal{C}_{\rm no}(f)=\mathcal{C}(f)\setminus\mathcal{C}_{\rm o}(f).
\]

If $f$ has the s-limit shadowing property, then
\[
f\colon CR(f)\to CR(f)
\]
satisfies the shadowing property. Then, for any $C\in\mathcal{C}(f)$, $x,y\in C$ with $x\sim_C y$ and $x\ne y$, and $\epsilon>0$, there are $m>0$, a closed $f^m$-invariant subset $Y$ of $X$, and a factor map
\[
\pi\colon(Y,f^m)\to(\{0,1\}^\mathbb{N},\sigma)
\]
such that
\[
Y\subset B_\epsilon(x)\cup B_\epsilon(y),
\]
where $B_\epsilon(\cdot)$ is the $\epsilon$-ball, and $\sigma\colon\{0,1\}^\mathbb{N}\to\{0,1\}^\mathbb{N}$ is the shift map (see, e.g.\:\cite{K1}). By this, we can show that for any $C\in\mathcal{C}_{\rm no}(f)$ and a neighborhood $U$ of $C$, there exists $C'\in\mathcal{C}(f)$ such that $C'$ is contained in $U$ and satisfies condition (1') of Theorem 1.1 for all $n\ge2$. In other words, letting
\begin{align*}
\mathcal{C}_\ast(f)=\{C\in\mathcal{C}(f)\colon&\text{There is a sequence $\delta_n>0$, $n\ge2$, such that for any $D\in\mathcal{D}(C)$},\\&\text{$D^n$ contains a $\delta_n$-distal $n$-tuple for $f$ for all $n\ge2$}\},
\end{align*}
we have
\[
\mathcal{C}_{\rm no}(f)\subset\overline{\mathcal{C}_\ast(f)}.
\]

For any continuous map $f\colon X\to X$, let
\[
\mathcal{C}_{\rm pte}(f)=\{C\in\mathcal{C}(f)\colon h_{\rm top}(f|_C)>0\},
\]
where $h_{\rm top}(\cdot)$ denotes the topological entropy. Then, we have
\[
\mathcal{C}_{\rm pte}(f)\subset\mathcal{C}_{\rm no}(f),
\]
and if $h_{\rm top}(f)>0$, then
\[
\mathcal{C}_{\rm pte}(f)\ne\emptyset.
\]
As a consequence, if $f$ has the s-limit shadowing property and satisfies $h_{\rm top}(f)>0$, then
\[
\mathcal{C}_\ast(f)\ne\emptyset.
\]


\begin{thebibliography}{99}

\bibitem{A} E.\:Akin, The general topology of dynamical systems. Graduate Studies in Mathematics, 1. American Mathematical Society, Providence, R.I., 1993.

\bibitem{An} D.V.\:Anosov, Geodesic flows on closed Riemann manifolds with negative curvature. Proc. Steklov Inst. Math. 90 (1967), 235 p.

\bibitem{AH} N.\:Aoki, K.\:Hiraide, Topological theory of dynamical systems. Recent advances. North--Holland Mathematical Library, 52. North--Holland Publishing Co., 1994.

\bibitem{BGO} A.D.\:Barwell, C.\:Good, P.\:Oprocha, Shadowing and expansivity in subspaces. Fund. Math. 219 (2012), 223--243.

\bibitem{BGKM} F.\:Blanchard, E.\:Glasner, S.\:Kolyada, A.\:Maass, On Li--Yorke pairs. J. Reine Angew. Math. 547 (2002), 51--68.

\bibitem{BCOT} J.\:Bobok, J.\:\v{C}in\v{c}, P.\:Oprocha, S.\:Troubetzkoy, S-limit shadowing is generic for continuous Lebesgue measure-preserving circle maps, Ergodic Theory Dynam. Systems 43 (2023), 78--98.

\bibitem{B} R.\:Bowen, Equilibrium states and the ergodic theory of Anosov diffeomorphisms. Lecture Notes in Mathematics, 470. Springer--Verlag, 1975.

\bibitem{BMR} W.R.\:Brian, J.\:Meddaugh, B.E.\:Raines, Chain transitivity and variations of the shadowing property. Ergodic Theory Dynam. Systems 35 (2015), 2044--2052.

\bibitem{C} C.\:Conley, Isolated invariant sets and the Morse index. CBMS Regional Conference Series in Mathematics, 38. American Mathematical Society, Providence, R.I., 1978.

\bibitem{F} H.\:Furstenberg, Recurrence in ergodic theory and combinatorial number theory. Princeton University Press, 1981.

\bibitem{GH} W.H.\:Gottschalk, G.A.\:Hedlund, Topological dynamics. Colloquium Publications of the American Mathematical Society, 36. American Mathematical Society, Providence, R.I., 1955.

\bibitem{H} M.\:Hurley, Lyapunov functions and attractors in arbitrary metric spaces. Proc. Amer. Math. Soc. 126 (1998), 245--256.

\bibitem{K1} N.\:Kawaguchi, Distributionally chaotic maps are $C^0$-dense. Proc. Amer. Math. Soc. 147 (2019), 5339--5348.

\bibitem{K2} N.\:Kawaguchi, Maximal chain continuous factor. Discrete Contin. Dyn. Syst. 41 (2021), 5915--5942.

\bibitem{K3} N.\:Kawaguchi, A type of shadowing and distributional chaos. Dyn. Syst. 36 (2021), 572--585.

\bibitem{LO} J.\:Li, P.\:Oprocha, On $n$-scrambled tuples and distributional chaos in a sequence. J. Differ. Equations Appl. 19 (2013), 927--941.

\bibitem{LY1} J.\:Li, X.\:Ye, Recent development of chaos theory in topological dynamics. Acta Math. Sin., Engl. Ser. 32 (2016), 83--114.

\bibitem{LY2} T.Y.\:Li, J.A.\:Yorke, Period three implies chaos. Amer. Math. Monthly 82 (1975), 985--992.

\bibitem{MO} M.\:Mazur, P.\:Oprocha, S-limit shadowing is $C^0$-dense. J. Math. Anal. Appl. 408 (2013), 465--475.

\bibitem{M} J.\:Mycielski, Independent sets in topological algebras. Fund. Math. 55 (1964), 139--147.

\bibitem{P} S.Yu.\:Pilyugin, Shadowing in dynamical systems. Lecture Notes in Mathematics, 1706. Springer--Verlag, 1999.

\bibitem{RW} D.\:Richeson, J.\:Wiseman, Chain recurrence rates and topological entropy. Topology Appl. 156 (2008), 251--261.

\bibitem{SS} B.\:Schweizer, J.\:Sm\'ital, Measures of chaos and a spectral decomposition of dynamical systems on the interval. Trans. Amer. Math. Soc. 344 (1994), 737--754.

\bibitem{S} T.\:Shimomura, On a structure of discrete dynamical systems from the view point of chain components and some applications. Japan. J. Math., New Ser. 15 (1989), 99--126.

\bibitem{TF} F.\:Tan, H.\:Fu, On distributional $n$-chaos. Acta Math. Sci., Ser. B, Engl. Ed. 34 (2014), 1473--1480. 

\bibitem{TX} F.\:Tan, J.\:Xiong, Chaos via Furstenberg family couple. Topology Appl. 156 (2009), 525--532.

\bibitem{W} S.\:Willard, General topology. Dover Publications, Inc., 2004.

\bibitem{WOC} X.\:Wu, P.\:Oprocha, G.\:Chen, On various definitions of shadowing with average error in tracing. Nonlinearity 29 (2016), 1942--1972.

\bibitem{X} J.\:Xiong, Chaos in topological transitive systems, Sci. China Ser. A 48 (2005), 929--939.

\bibitem{XLT} J.\:Xiong, J.\:L\"{u}, F.\:Tan, Furstenberg family and chaos. Sci. China Ser. A 50 (2007), 1325--1333.

\end{thebibliography}
\end{document}